\documentclass[a4paper,11pt]{article}
\usepackage[utf8]{inputenc}
\usepackage{geometry}
\usepackage[]{hyperref}
\usepackage{titlesec}
\usepackage{etoolbox}
\usepackage{chngcntr}
\usepackage{graphicx}
\usepackage{lipsum}
\usepackage{xcolor}
\usepackage{amsmath,amssymb,amsfonts}
\usepackage{fancyhdr}
\usepackage{todonotes}
\usepackage[ruled,vlined,linesnumbered]{algorithm2e}

\usepackage{tikz}

\counterwithout{figure}{section}

\patchcmd{\thebibliography}{\chapter*}{\section*}{}{}
\titleformat{\chapter}[frame]{\normalfont}{}{14pt}{\LARGE\bfseries\filcenter}
\usepackage{amsthm}

\newtheorem*{thmA}{Theorem A}

\theoremstyle{plain}

\newtheorem{theorem}{Theorem}[section]
\newtheorem{lemma}[theorem]{Lemma}
\newtheorem{corollary}[theorem]{Corollary}
\newtheorem{prop}[theorem]{Proposition}

\newtheorem{example}{Example}

\newtheorem{definition}{Definition}
\newtheorem{remark}{Remark}

\newcommand{\R}{\mathbb{R}}

\title{Application of polynomial algebras to non-linear equation solvers}
\author{J. Canela 
\\ Insitut Universitari de Matemàtiques i Aplicacions de Castelló\\ and Departament de Matemàtiques, \\
        Universitat Jaume I, \and D. Pérez-Palau\thanks{ Both authors are supported by the project CIGE/2023/102 funded by Generalitat Valenciana. The first author is also supported by the project UJI-B2022-46 (Universitat Jaume I) and the project PID2023-147252NB-I00 financed by MICIU/AEI MCIN/AEI/10.13039/501100011033 and
by FEDER, UE.}\\Departament de Matemàtiques,\\ Univesitat Politècnica de Catalunya - Barcelona Tech,\\ C/ Colom, 11, 08222 Terrassa, Spain. \\}

\begin{document}

\maketitle

\begin{abstract}
This paper presents a novel application of Jet Transport, a high-order automatic differentiation technique, to enhance classical numerical methods, with a focus on Newton's method. We prove a central theorem establishing that, under appropriate conditions, applying Jet Transport within a Newton iteration doubles the number of correct coefficients in the Taylor series approximation of the solution. This theoretical result is then extended to the practical case where the exact solution is unknown, demonstrating the expected quadratic convergence (error reduction from \( \varepsilon \) to \( \varepsilon^2 \)) while simultaneously doubling the order of accuracy in the series expansion. The efficacy of the resulting Jet-Newton method is demonstrated through three illustrative examples: an academic problem validating the theoretical convergence rates, the solution of Kepler's equation, and a new continuation algorithm for computing zero-velocity curves in the circular restricted three-body problem. These examples showcase the method's capability to provide high-order semi-analytical approximations.
\end{abstract}

\section{Introduction}\label{Sec:intro}

During the 1990s, a new numerical technique emerged based on jet propagation through polynomial algebras. The core idea of this approach is to approximate a function $f(x)$ with an $N$-th order Taylor polynomial $P(\xi)$ such that $P(\xi) \approx f(x_0 + \xi)$. This polynomial approximation, called the \emph{jet} of $f$, enables a systematic framework for computing functional approximations.

Due to the connection between polynomial approximations and Taylor series, jets can compute derivatives of arbitrary functions. This dual capability has led to various names for the method, including: Jet Transport \cite{Alessi09,Berz99,chen20b}, Differential Algebra \cite{armellin18}, Automatic Differentiation \cite{rall_perspectives_2006}.

The implementation of this algebraic framework is critical for practical applications. Classically, a jet is represented as a truncated formal series:
$$
P(\xi) = \sum_{k=0}^N a_k \xi^k.
$$
Where $\xi\in \mathbb R$ and $a_k\in\mathbb R$.

Two fundamental implementation challenges arise: storage of jets, typically implemented via lexicographical ordering or tree-based data structures, and product operations.
Several implementations exist in the literature, with notable contributions from \cite{GODOT, Mamotreto16,Jorba05,Makino2006,rasotto_differential_2016}, each adopting different strategies for these computational primitives.

For jets in one variable, consider:
\[
A(\xi) = \sum_{i=0}^n a_i\xi^i \quad \text{and} \quad B(\xi) = \sum_{i=0}^n b_i\xi^i, ~~ b_0\neq 0.
\]
Let $P(\xi) = A(\xi)B(\xi) = \sum_{i=0}^n p_i \xi^i$ be their product and $Q(\xi) = A(\xi)/B(\xi) = \sum_{i=0}^n q_i \xi^i$ their quotient. The coefficients of $P(\xi)$ and $Q(\xi)$ are given by \cite{Jorba05}:
\begin{equation}\label{eq:polynomialBasics}
p_i = \sum_{j=0}^i a_{i-j}b_j, \quad q_i = \dfrac{a_i - \sum_{j=1}^i q_{i-j}b_j}{b_0}.
\end{equation}

Therefore, jets on one variable without term of order 0 form a field. For every non-zero jet $J$, there exists a multiplicative inverse. By leveraging the composition law, one can derive explicit formulae for an extended set of operations, including: trigonometric functions, exponentials, and logarithms, amongst others.

From Equation~\ref{eq:polynomialBasics}, it follows immediately that if a jet $A(\xi)$ contains no terms of order smaller than $m$, then neither does the quotient $A(\xi)/B(\xi)$. This is formalized in the following trivial lemma. Recall that given two functions $f$ and $g$ defined on a neighbourhood of $x_0\in \R$, we say that $f(x)=\mathcal{O}(g(x))$ as $x\rightarrow x_0$ if there exists an $\varepsilon> 0$ and an $M>0$ such that if $|x-x_0|<\varepsilon$ then $|f(x)|\leq M|g(x)|$.

\begin{lemma}\label{lem:ordrediv}
Let $f(x)$ and $g(x)$ be two continuous functions defined around $x=0$ such that $f(x) = \mathcal{O}(x^m)$ and $g(x) = \mathcal{O}(1)$ as $x\rightarrow 0$, with $g(0) \neq 0$. Then
\[
\frac{f(x)}{g(x)} = \mathcal{O}(x^m).
\]
\end{lemma}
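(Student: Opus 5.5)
The plan is to use continuity of $g$ at $0$ to bound $|g|$ away from zero near the origin, and then combine that lower bound with the definition of $f(x)=\mathcal{O}(x^m)$ recalled before the statement. The hypothesis $g(x)=\mathcal{O}(1)$ is not actually needed for this direction; what matters is that $g$ does not vanish at $0$ and is continuous there.

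First, set $c=|g(0)|>0$. By continuity of $g$ at $0$, applied with tolerance $c/2$, there is $\delta_1>0$ such that $|x|<\delta_1$ implies $|g(x)-g(0)|<c/2$. Hence
\[
|g(x)|\geq |g(0)|-|g(x)-g(0)|> \frac{c}{2}
\]
on this neighbourhood. In particular $f/g$ is well defined for $|x|<\delta_1$.

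Second, by the hypothesis on $f$ there exist $\varepsilon_1>0$ and $M_1>0$ such that $|f(x)|\leq M_1|x|^m$ whenever $|x|<\varepsilon_1$. Taking $\varepsilon=\min(\varepsilon_1,\delta_1)$ and $M=2M_1/c$, every $|x|<\varepsilon$ satisfies
\[
\left|\frac{f(x)}{g(x)}\right|\leq \frac{M_1|x|^m}{c/2}=M|x|^m,
\]
which is exactly the definition of $f(x)/g(x)=\mathcal{O}(x^m)$ as $x\to 0$.

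There is no real obstacle here. The only point needing care is the first step, namely that a positive lower bound on $|g|$ comes from continuity together with $g(0)\neq 0$, and not from the $\mathcal{O}(1)$ assumption, which only bounds $g$ from above.
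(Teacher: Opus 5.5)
Your proof is correct and is essentially the argument the paper has in mind. The paper gives no formal proof, only a remark that the lemma follows from the definition of $\mathcal{O}$ and the Taylor expansion of $1/g$, i.e.\ that $1/g$ is bounded near $0$, which combined with $|f(x)|\le M_1|x|^m$ gives the claim. You get that boundedness directly from continuity and $g(0)\neq 0$ via $|g(x)|>|g(0)|/2$, which is slightly more elementary and uses exactly the stated hypotheses (no differentiability of $g$ is needed); you are also right that the $\mathcal{O}(1)$ assumption on $g$ plays no role.
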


\begin{remark}
This result is immediate from the definition of $\mathcal{O}$ notation and the Taylor expansion of $1/g(x)$. We include it to highlight the preservation of order in jet divisions.
\end{remark}

\begin{remark}
Along the paper we use the variable $x$ for functions and $\xi$ for the polynomial approximations. Hence, Lemma~\ref{lem:ordrediv} will mostly be used using the variable $\xi$.
\end{remark}

With these properties, the algebra of jets can be used to approximate functions numerically, analogous to how floating-point arithmetic approximates real numbers. While floating-point representation truncates values at a fixed number of significant digits, the jet approach truncates formal series at a specified order $N$, providing a polynomial approximation of the function.

Several authors have employed this approximation scheme in applied problems. One particularly noteworthy application is the propagation of solutions to ordinary differential equations (ODEs). When polynomial algebra is applied to differential equation propagators (such as Runge-Kutta or Taylor methods) \cite{Gimeno22}, the resulting variational equations are obtained as though the original numerical method had been applied directly. 

This technique has found applications across diverse fields. Berz \cite{Berz99} pioneered its use in studying particle accelerator dynamics. In astrodynamics, researchers have extensively employed it to analyze uncertainty propagation in spacecraft and asteroid trajectory predictions \cite{Alessi09, armellin18, valli14}. The methodology was further extended to filtering problems by Chen et al. \cite{chen20,chen20b}, who incorporated jet propagation into the classical Kalman filter framework. Recently, Fernández-Mora et al \cite{fernandez-mora_flow_2024} and  Barcelona et al \cite{barcelona_semianalytical_2024} used such techniques to compute, using the parametrization method, invariant manifolds of partially hyperbolic tori and heteroclinic connections of periodic orbits.

From a theoretical perspective, the foundations of jet transport for differential equations were rigorously established by Gimeno et al. Their work \cite{Gimeno17} demonstrated the computation of periodic orbits in delayed differential equations via jet transport. Subsequent research \cite{Gimeno22} achieved high-order expansions of invariant manifolds around high-dimensional tori, showcasing the method's capability for complex dynamical systems.

As evidenced by the literature, jet transport has been successfully applied to the propagation of differential equations and continuous dynamical systems. In these methods, a jet $P(\xi)$ of order $N$ is typically computed to satisfy $f(P(\xi)) = 0$ for a given function $f$ (e.g., an invariance equation or flow map equation under certain regularity conditions). Classical approaches, such as the order matching algorithm, achieve this by computing the coefficients of $P(\xi)$ sequentially through order-by-order matching.  

In this work, we propose a novel alternative: a Newton-type iteration that computes all coefficients simultaneously, offering potential advantages in convergence rate and numerical stability.

Consider a nonlinear equation $f(x, c) = 0$ depending on a parameter $c$. A numerical solution can be obtained via Newton's method:

$$
x_{i+1} = N(x_i) = x_i - \frac{f(x_i, c)}{f_x(x_i, c)},
$$
where $f_x$ denotes the partial derivative with respect to $x$. Given an initial guess $x_0$ sufficiently close to the solution $x^\star$, this iteration converges to $x^\star$.  

To extend this solution across a neighbourhood of $c$, we seek a jet approximation $x(\xi)$ satisfying the parametric equation:
$$
f(x(\xi), c + \xi) = 0,
$$
where $\xi$ represents a small perturbation of the parameter $c$. This approach captures the solution's dependence on $c$ through a polynomial expansion.

To compute this solution, we embed polynomial algebra into Newton's method by defining the iteration:
\[
x_{i+1}(\xi) = N(x_i(\xi)) = x_i(\xi) - \frac{f(x_i(\xi), c + \xi)}{f_x(x_i(\xi), c + \xi)},
\]
where all operations are performed in the polynomial algebra framework. Consequently, each iterate $x_{i+1}(\xi)$ remains a polynomial in $\xi$, preserving the jet structure throughout the computation.

The following example illustrates the procedure:
\begin{example}\label{exa:example}
    Given the function $f(x,c)=x^2+x+c$, where $c\in\mathbb{R}$, we want to find a parametric solution of $f(x,c)=0$ around $c=c_0=0$. Notice that $x=0$ is a solution of $f(x,c_0)=0$ $(f(0,0)=0)$.
    The exact solution is given by $x(c)=(-1+\sqrt{1-4c})/2$. The Taylor series of the previous function can be computed by derivation, obtaining
    $$
    x(c_0+\delta_c)=-\delta_c - \delta_c^2 -2\delta_c^3 - 5\delta_c^4 - 14\delta_c^5 -42\delta_c^6 - 132\delta_c^7 - 429\delta_c^8-1430\delta_c^9+ \mathcal O (\delta_c^{10}).
    $$
    
    Instead of using derivation, we can obtain the Taylor series of $x_c$ (up to a given order) applying the Newton Polynomial Technique

    $$
    x_{i+1}=x_i - \dfrac{f(x_i,0+\xi)}{f_x(x,0+\xi)}.
    $$
    
    We look for a functional solution $x(\xi)$ of $f(x(\xi),c(\xi))=0$, where $c(\xi)=c_0+\xi=0+\xi$. We take as initial approximation of the constant solution $x_0(\xi)=0$, since $f(x_0(\xi),c(0))=0$. After iterating the procedure once, we obtain

    $$x_1(\xi)=x_0(\xi) - \dfrac{f(x_0(\xi),0+\xi)}{f_x(x_0(\xi),0+\xi)}=0-\dfrac{0^2+0+0+\xi}{2\cdot0+1}=-\xi.$$
      
    Repeating the procedure and applying the polynomial algebra formulas \eqref{eq:polynomialBasics} to the fraction we get
    
   $$x_2(\xi)=x_1(\xi) - \dfrac{f(x_1(\xi),0+\xi)}{f_x(x_1(\xi),0+\xi)}=0+\dfrac{(-\xi)^2-\xi+0+\xi}{2(-\xi)+1}=-\xi-\xi^2-2\xi^3+\mathcal O(\xi^3).$$
   
   Counting the constant term, the power series of $x_1(\xi)$ has 2 exact terms. After applying the procedure, $x_2(\xi)$ has 4 exact terms. As we will see, at each step of new Newton Polynomial Technique we double the number of exact terms. Indeed, for $x_3(\xi)$ we get

     $$x_3(\xi)=-\xi-\xi ^2 - 2\xi^3 - 5 \xi^4 - 14 \xi^5 -42 \xi^6- 132 \xi^7+ \mathcal O(\xi^{8}). $$
\end{example}

As demonstrated by this example, the Newton procedure applied to jet variables exhibits a quadratic convergence property in the polynomial coefficients: each iteration doubles the number of correct terms in the Taylor expansion. The primary objectives of this paper are:

\begin{enumerate}
    \item To rigorously prove that this order-doubling property holds universally for sufficiently smooth functions under standard regularity conditions.
    \item To study the convergence behaviour when the initial seed $x_0(\xi)$ is not exact. In particular, to study how the error propagates through iterations.
\end{enumerate}

To that end, the structure of the paper is as follows. In Section~\ref{sec:theorem} we give the main result of the paper for exact initial seeds at order 0. There, we prove that at each iterate of the procedure the number of vanishing coefficients of the Taylor expansion of $f(x_i(\xi),c_0+\xi)$ doubles. In Section~\ref{sec:numericalAplication} we expand the previous result to the case when the initial seed contains an error at order 0. In Section~\ref{sec:applications} we give some numerical examples and applications to the procedure. Finally, in Section~\ref{sec:conclusions} we provide conclusions and possible continuations to this work.

\section{Main Theorem}\label{sec:theorem}

In this section we formalize the results discussed up to now. This is the content of Theorem A. Before stating the theorem, we define some preliminary concepts. We start with the concept of polynomial zero of a function $f(x,c)$, defined for $x\in I\subset \R$ and depending on a parameter $c\in\Lambda\subset \R$.
    
    \begin{definition}\label{def:polzero} A polynomial
    $P(\xi)$ is a {\bf polynomial zero up to order $n$} of a function $f(x,c)$ around $(x_0,c_0)\in I\times \Lambda$ if $P(0)=x_0$ and 
    $$f(P(\xi), c_0+\xi)=  \mathcal{O}(\xi^{n+1}).$$
    \end{definition}

In the paper we assume that $f$ is sufficiently smooth (for $x\in I$ and $c\in\Lambda$) and consider pairs $(x_0,c_0)$ such that $x_0$ is a simple zero of $f(x,c_0)$, that is, $f(x_0, c_0)=0$ and $f_{x}(x_0,c_0)=\alpha\neq 0$. For short, we say that $(x_0,c_0)$ is a simple zero $f$ or, directly, that $x_0$ is a simple zero.  This condition is imposed so as to guarantee the existence and unicity of a local solution $x(\xi)$, $x(0)=x_0$, of the functional equation $f(\cdot, c_0+\xi)=0$ for $\xi$ in a neighbourhood of 0. This solution $x(\xi)$ is what we aim to approximate using polynomials. This leads to the concept of polynomial approximation.

\begin{definition}\label{def:polapprox}
Let $x(\xi)$ be a sufficiently smooth function defined on a neighbourhood of $\xi=0$. We say that a polynomial $\tilde{x}_n(\xi)$ is a \textbf{polynomial approximation up to order $n$} if $\tilde{x}_n(\xi)=x(\xi)+\mathcal{O}(\xi^{n+1})$.
    
\end{definition}

The next proposition shows the relation between polynomial approximations and polynomial zeros. 

\begin{prop}\label{prop:approx-zero}
Let $f(x,c)$ be a sufficiently smooth function that depends on a parameter $c\in\mathbb{R}$, $(x_0,c_0)$ be a simple zero of $f$, and $x(\xi)$ be the (unique) functional solution of $f(x(\xi),c_0+\xi)=0$ with $x(0)=x_0$, defined on a neighbourhood of $\xi=0$. Then, $\tilde{x}_n(\xi)$ is a polynomial approximation up to order $n$ of $x(\xi)$ if, and only if, $\tilde x_n (\xi)$ is a polynomial zero up to order $n$ of $f(x, c)$ around $(x_0,c_0)$.
\end{prop}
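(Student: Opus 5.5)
The approach is a first-order Taylor expansion of $f$ in the $x$ variable around the exact solution curve $(x(\xi),c_0+\xi)$. Write $\tilde x_n(\xi)=x(\xi)+e(\xi)$, where $e(\xi)=\tilde x_n(\xi)-x(\xi)$ is smooth near $\xi=0$. Since $f$ is sufficiently smooth, the mean value form of Taylor's theorem (integral form, to keep smoothness in $\xi$) gives
$$
f(\tilde x_n(\xi),c_0+\xi)=f(x(\xi),c_0+\xi)+e(\xi)\,g(\xi)=e(\xi)\,g(\xi),
$$
with
$$
g(\xi)=\int_0^1 f_x\bigl(x(\xi)+t\,e(\xi),c_0+\xi\bigr)\,dt .
$$
Here we used $f(x(\xi),c_0+\xi)=0$. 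The function $g$ is continuous near $0$.

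The whole proof reduces to showing $g(0)\neq 0$, and here the two directions need different care.

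For the forward implication, assume $e(\xi)=\mathcal{O}(\xi^{n+1})$. Then $e(0)=0$, so $\tilde x_n(0)=x_0$ and $g(0)=f_x(x_0,c_0)=\alpha\neq 0$. Since $g$ is continuous it is $\mathcal{O}(1)$, so $e\,g=\mathcal{O}(\xi^{n+1})\cdot\mathcal{O}(1)=\mathcal{O}(\xi^{n+1})$. This is the polynomial zero condition, and $\tilde x_n(0)=x_0$ holds.

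For the converse, assume $\tilde x_n$ is a polynomial zero up to order $n$. By Definition~\ref{def:polzero} we have $\tilde x_n(0)=x_0$, hence $e(0)=0$ and again $g(0)=\alpha\neq0$. Then
$$
e(\xi)=\frac{f(\tilde x_n(\xi),c_0+\xi)}{g(\xi)} .
$$
Lemma~\ref{lem:ordrediv}, applied with numerator $\mathcal{O}(\xi^{n+1})$ and denominator $g$ continuous with $g(0)\ne0$, gives $e=\mathcal{O}(\xi^{n+1})$. This is exactly Definition~\ref{def:polapprox}.

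The main point needing care is that the condition $P(0)=x_0$ is built into Definition~\ref{def:polzero}. Without it the converse could fail, because $\tilde x_n$ might track a different branch of zeros. With it, $g(0)=\alpha$ follows, and $g$ stays nonzero near $0$ by continuity. One should also check that $x(\xi)+t\,e(\xi)$ stays in $I$ for small $\xi$, which holds since $e(0)=0$ and both functions are continuous. Apart from these checks, the argument is routine.
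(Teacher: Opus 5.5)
Your proof is correct, and it differs from the paper's mainly in the converse. The forward direction is essentially the paper's: a Taylor expansion in $x$ about $x(\xi)$, which the paper carries out with informal $\mathcal{O}$-remainders.

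For the converse, the paper argues by contrapositive. It takes the degree-$n$ Taylor truncation of $x(\xi)$, which is a polynomial zero by the forward direction. It then shows that any polynomial whose first discrepancy with it is $a_i\xi^i$ has residual $f_x(x_0,c_0)a_i\xi^i+\mathcal{O}(\xi^{i+1})$, so it cannot be a polynomial zero up to order $n$; this relies on uniqueness of Taylor polynomials.

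Your exact factorization $f(\tilde x_n(\xi),c_0+\xi)=e(\xi)g(\xi)$ with $g(0)=\alpha$ gives both directions from one identity and turns the converse into a direct division via Lemma~\ref{lem:ordrediv}. It needs only $C^1$ regularity of $f$, keeps the estimates rigorous, and shows exactly where $P(0)=x_0$ is used. By contrast, the paper's minimal-index argument tacitly excludes $i=0$, and its ``$i<n$'' should read $i\le n$.

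The paper's coefficientwise phrasing does suit the jet viewpoint: it identifies the lowest-order wrong coefficient, scaled by $\alpha$, as the lowest-order nonzero coefficient of the residual. That information also follows immediately from your factorization.
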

\begin{proof}

We first prove that polynomial approximation implies polynomial zero.
We have $\tilde x_n(\xi)= x(\xi) + \mathcal O(\xi ^{n+1})$. Applying this to $f(\cdot,c_0+\xi)=0$ and expanding by Taylor with respect to $x$ around $x(\xi)$ we obtain:
$$f(\tilde x_n(\xi),c_0+\xi) = f(x(\xi),c_0+\xi) + f_x(x(\xi),c_0+\xi) \mathcal O(\xi^{n+1}) + \mathcal O(\xi^{2n+2})=\mathcal O(\xi^{n+1}).$$

Therefore, $\tilde x_n(\xi)$ is a polynomial zero up to order $n$ of $f(x,c)$ around $(x_0,c_0)$.

 We now prove that polynomial zero implies polynomial approximation.
    Let $\tilde x_n(\xi)$ be the truncation up to order $n$ of the Taylor polynomial of the function $x(\xi)$. Assume $\tilde y(\xi)$ is not the truncation of order $n$. Then $\exists i$ minimal, $i<n$, such that: 
   $$\tilde y(\xi)=\tilde x(\xi)+ a_i\xi^i+\mathcal O(\xi^{i+1}).$$
   Therefore, $\tilde y(\xi)$ is not the approximation of order $n$ of $x(\xi)$ since the Taylor polynomial is unique.
    Then, using the Taylor expansion of $f(\cdot,c_0+\xi)$ with respect $x$ around $\tilde x_n(\xi)$ we get:
    $$
    \begin{array}{rcl}
        f(\tilde y(\xi),c_0+\xi)&=& f(\tilde x_n(\xi)+a_i\xi^i+\mathcal O(\xi^{i+1}),c_0+\xi) \\
        &=&f(\tilde x_n(\xi), c_0+\xi)+ f_x(\tilde x_n(\xi), c_0+\xi) a_i\xi^i+\mathcal O(\xi^{i+1})\\
        &=&\mathcal O(\xi^{n+1}) + (f_x(x_0,c_0)+\mathcal O(\xi)) a_i \xi^i+\mathcal O(\xi^{i+1})\\
        &=&f_x(x_0,c_0) a_i \xi^i + \mathcal O(\xi^{i+1}).
    \end{array}
    $$
    Hence, $\tilde y(\xi)$ is not a polynomial zero up to order $n$. Therefore, if $\tilde x(\xi)$ is a polynomial zero, it must be the polynomial approximation of $x(\xi)$ up to order $n$.
\end{proof}

 The next result describes how Newton method acts locally when applied using polynomial algebras. The result requires $(x_0,c_0)$ to be a simple zero of $f(x,c)$. As we may see numerically later, even though Newton's method converges linearly in $\R$ in the case of double zeros, it may not converge for polynomial approximations.

        \begin{thmA}\label{teo:main}
    Let $f(x,c)$ be a sufficiently smooth function defined for $x\in I\subset \R$ and $c\in\Lambda \subset\R$. Let $(x_0,c_0)\in I\times \Lambda$ be a simple zero of $f(x,c)$. Let $N(x)$ denote the Newton method using polynomial iterates,
    $$P_{i+1}(\xi)=N(P_i(\xi))=x-\dfrac{f(P_k(\xi),c_0+\xi)}{f_x(P_i(\xi),c_0+\xi)},$$
    
    \noindent and let $P_0(\xi)=x_0$.
    Then, the iterate $P_i(\xi)$ is a polynomial zero up to order $2^i-1$ of $f(x,c)$ around $(x_0,c_0)$. 
    More generally, if $P(\xi)$ with $P(0)=x_0$ is a polynomial zero up to order $n-1$ of $f(x,c)$ around $(x_0,c_0)$, then $N(P(\xi))$ is a polynomial zero up to order $2n-1$. 
    \end{thmA}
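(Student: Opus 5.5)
The plan is to prove the general statement first and then get the claim about $P_i$ by induction. The base case is immediate: $P_0(\xi)=x_0$ satisfies $f(x_0,c_0+\xi)=f(x_0,c_0)+\mathcal{O}(\xi)=\mathcal{O}(\xi)$, so $P_0$ is a polynomial zero up to order $0=2^0-1$. If $P_i$ is a polynomial zero up to order $2^i-1$, the general statement with $n=2^i$ shows that $P_{i+1}$ is a polynomial zero up to order $2^{i+1}-1$. We also need $P_{i+1}(0)=x_0$; this follows because the Newton correction vanishes at $\xi=0$, which is checked below.

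For the general statement, write $g(\xi)=f(P(\xi),c_0+\xi)$, with $g(\xi)=\mathcal{O}(\xi^{n})$ by hypothesis. Write $h(\xi)=f_x(P(\xi),c_0+\xi)$, with $h(0)=f_x(x_0,c_0)=\alpha\neq 0$. Lemma~\ref{lem:ordrediv} gives $\delta(\xi):=g(\xi)/h(\xi)=\mathcal{O}(\xi^n)$. In particular $\delta(0)=0$, so $N(P)(0)=x_0$.

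Next I would Taylor expand $f(\cdot,c_0+\xi)$ in $x$ around $P(\xi)$ with integral (or Lagrange) remainder:
$$
f\bigl(P(\xi)-\delta(\xi),c_0+\xi\bigr)=g(\xi)-h(\xi)\delta(\xi)+R(\xi),\qquad |R(\xi)|\le \tfrac{1}{2}M\,\delta(\xi)^2 .
$$
Here $M$ bounds $|f_{xx}|$ on a compact neighbourhood of $(x_0,c_0)$; smoothness of $f$ and continuity of $P$ and $\delta$ ensure the segment stays in that neighbourhood for small $\xi$. By construction $g-h\delta=0$ identically, so $f(N(P)(\xi),c_0+\xi)=R(\xi)=\mathcal{O}(\xi^{2n})$. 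Thus $N(P)$ is a polynomial zero up to order $2n-1$.

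The main obstacle is the gap between the function-level computation and the jet arithmetic actually performed. In the jet algebra, $g$, $h$ and the quotient are truncated at some order, so the computed $N(P)$ is the Taylor polynomial of $P-\delta$ rather than $P-\delta$ itself. I would handle this by assuming the truncation order is at least $2n-1$ and writing $\tilde\delta=\delta+\mathcal{O}(\xi^{2n})$. Expanding $f$ around $P-\delta$ then gives $f(P-\tilde\delta,c_0+\xi)=f(P-\delta,c_0+\xi)+\mathcal{O}(\xi^{2n})=\mathcal{O}(\xi^{2n})$, by the same first-order Taylor argument used in Proposition~\ref{prop:approx-zero}.

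Finally, the jet quotient formula \eqref{eq:polynomialBasics} produces exactly the Taylor coefficients of $g/h$, because the series product is the Cauchy product and $h(0)\neq 0$. So the jet computation agrees with the analytic one up to the truncation order, and the argument above applies. By Proposition~\ref{prop:approx-zero}, this also shows that $P_i$ reproduces the first $2^i$ Taylor coefficients of the true solution $x(\xi)$.
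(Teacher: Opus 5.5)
Your proof is correct and follows the paper's route. The Newton correction $f(P,c_0+\xi)/f_x(P,c_0+\xi)$ is $\mathcal{O}(\xi^n)$ by Lemma~\ref{lem:ordrediv}. Taylor expanding $f(\cdot,c_0+\xi)$ about $P(\xi)$ cancels the zeroth- and first-order terms, the quadratic remainder gives $\mathcal{O}(\xi^{2n})$, and induction from $P_0=x_0$ finishes. Your extra points are sound refinements of things the paper leaves implicit: the Lagrange remainder bounded by $\sup|f_{xx}|$, the check that $N(P)(0)=x_0$, and the argument that truncating the jet at order at least $2n-1$ preserves the estimate.
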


    \proof
        Notice that the (constant) polynomial $P_0(\xi)=x_0$ is a polynomial zero of order $0=2^0-1$. Indeed, by hypothesis we have that $f(x_0,c_0+\xi)=0+\mathcal{O}(\xi)$.
        
        For the sake of clarity, next we prove that $P_1$ is a polynomial zero up to order $2^1-1$. We have:
        \begin{align*}
                P_1(\xi)&= P_0(\xi) - \dfrac{f(P_0(\xi), c_0+\xi)}{f_x(P_0(\xi),c_0+\xi)}\\
                        &= x_0 - \dfrac{f(x_0, c_0+\xi)}{f_x(x_0,c_0+\xi)}.
        \end{align*}
   
Evaluating $f$ using its Taylor expansion with respect to $x$ around $x_0$ we get:
    \begin{align*}
    f(P_1(\xi),c_0+\xi) =& f\left(x_0 - \dfrac{f(x_0, c_0+\xi)}{f_x(x_0,c_0+\xi)},c_0+\xi \right)    \\
    =& f(x_0,c_0+\xi) +f_x(x_0,c_0+\xi) \left( -\dfrac{f(x_0, c_0+\xi)}{f_x(x_0,c_0+\xi)}\right)\\& + \frac{1}{2}f_{xx}(x_0,c_0+\xi) \left(- \dfrac{f(x_0, c_0+\xi)}{f_x(x_0,c_0+\xi)}\right)^2+\mathcal{O}\left(\left(- \dfrac{f(x_0, c_0+\xi)}{f_x(x_0,c_0+\xi)}\right)^3\right)\\
     =& \frac{1}{2}f_{xx}(x_0,c_0+\xi) \left(- \dfrac{f(x_0, c_0+\xi)}{f_x(x_0,c_0+\xi)}\right)^2+\mathcal{O}\left(\left(- \dfrac{f(x_0, c_0+\xi)}{f_x(x_0,c_0+\xi)}\right)^3\right).
    \end{align*}
     
     Observe that, since  $f=\mathcal O(\xi)$ and $g=\mathcal O(1)$, $f/g=\mathcal O(\xi)$ (see Lemma~\ref{lem:ordrediv}). Since $(x_0, c_0)$ is a simple zero of $f(x,c)$, there is $\alpha\neq 0$ such that $f_x(x_0,c_0+\xi)=\alpha+\mathcal O(\xi)$. We get that
    $$f(P_1(\xi),c_0+\xi)=0+\mathcal{O}(\xi^2).$$
    We conclude that $P_1(\xi)$ is a polynomial zero of order $1=2^1-1$.

    We now prove that if $P(\xi)$ is a polynomial zero up to order $n-1$ of $f(x,c)$ around $(x_0,c_0)$, then $N(P(\xi))$ is a polynomial zero up to order $2n-1$. We have
    
    $$f(P(\xi),c_0+\xi)=\mathcal{O}\left(\xi^{n}\right).$$
    
    The polynomial $N(P(\xi))$ is given by
    $$N(P(\xi))= P(\xi) - \underbrace{\dfrac{f(P,c_0+\xi)}{f_x(P,c_0+\xi)}}_{\widetilde P(\xi)}.$$
    
    Evaluating $f(\cdot,c_0+\xi)$ at $N(P(\xi))$ and using the Taylor expansion of $f(x,c_0+\xi)$ with respect to $x$ around $P(\xi)$ we get
    \begin{align*}
        f(N(P(\xi)),c_0+\xi)=& f\left(P(\xi) - \tilde P(\xi),c_0+ \xi\right) \\
                        =& f(P(\xi),c_0+\xi) - f_x(P(\xi),c_0+\xi) \dfrac{f(P(\xi),c_0+\xi)}{f_x(P,c_0+\xi)} \\
                         &+\frac 1 2 f_{xx}(P(\xi),c_0+\xi) (-\widetilde P(\xi))^2+\mathcal{O}\left(\left(\tilde P(\xi)\right)^3\right).
    \end{align*}

    We have $f(P(\xi),c_0+\xi)=\mathcal{O}\left(\xi^{n}\right)$. Since $(x_0, c_0)$ is a simple zero of $f(x,c)$, there is $\alpha\neq 0$ such that $f_x(P(\xi),c_0+\xi)=\alpha+\mathcal O(\xi)$. By Lemma~\ref{lem:ordrediv} we get
    
    $$
    \tilde P(\xi)=\dfrac{f(P(\xi),c_0+\xi)}{f_x(P(\xi),c_0+\xi)}= \mathcal{O}\left(\xi ^{n}\right).
    $$
    Finally, since $f_{xx}(P(\xi),c_0+\xi)=\mathcal O(1)$, we get
    \begin{align*}
        f(N(P(\xi)),c_0+\xi)
                        =&\dfrac 1 2  f_{xx}(P(\xi),c_0+\xi) \left(\tilde P(\xi)\right)^2+\mathcal{O}\left(\left(\tilde P(\xi)\right)^3\right)\\
                        =& \dfrac 1 2 f_{xx}(P(\xi),c_0+\xi) \left({\mathcal{O}\left(\xi^{n}\right)}\right)^2+\mathcal{O}\left(\left(\mathcal{O}\left(\xi^{n}\right)\right)^3\right)\\
                        =& \mathcal{O}\left(\xi^{2n}\right).
    \end{align*}

    Therefore, $N(P(\xi))$ is a polynomial zero up to order $2n-1$ of $f$. The fact that $P_i$ is a polynomial zero up to order $2^i-1$ follows by induction on $i$ from the previous result using that $P_1(\xi)$ is a polynomial zero up to order $2^1-1$ and that $P_{i+1}(\xi)=N(P_i(\xi))$.
   
\endproof

Combining Theorem A with Proposition~\ref{prop:approx-zero} we obtain the following result. 

\begin{corollary}\label{cor:double}
    Under the assumptions of Proposition~\ref{prop:approx-zero} and Theorem A, if $P(\xi)$ is a polynomial approximation of $x(\xi)$ of order $n-1$, then $N(P(\xi))$ is a polynomial approximation of $x(\xi)$ of order $2n-1$.
\end{corollary}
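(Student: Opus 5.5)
The plan is to pass twice through Proposition~\ref{prop:approx-zero}, with the second part of Theorem A in between.

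\emph{Step 1: convert the approximation into a zero.} Let $P(\xi)$ be a polynomial approximation of $x(\xi)$ up to order $n-1$, so that $P(\xi)=x(\xi)+\mathcal{O}(\xi^{n})$. Setting $\xi=0$ gives $P(0)=x(0)=x_0$. The ``only if'' direction of Proposition~\ref{prop:approx-zero}, applied with order $n-1$, then shows that $P(\xi)$ is a polynomial zero up to order $n-1$ of $f(x,c)$ around $(x_0,c_0)$. The proposition is available because $(x_0,c_0)$ is a simple zero, and this also guarantees that $x(\xi)$ exists and is unique.

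\emph{Step 2: apply the Newton step.} We now meet the hypotheses of the general statement in Theorem A: $P(0)=x_0$ and $P$ is a polynomial zero up to order $n-1$. The theorem gives that $N(P(\xi))$ is a polynomial zero up to order $2n-1$. One point needs checking here: Definition~\ref{def:polzero} requires $N(P)(0)=x_0$. This holds because
\[
N(P)(0)=x_0-\frac{f(x_0,c_0)}{f_x(x_0,c_0)}=x_0,
\]
using $f(x_0,c_0)=0$ and $f_x(x_0,c_0)=\alpha\neq 0$. The same nonvanishing of $\alpha$ makes the jet division well defined, by Lemma~\ref{lem:ordrediv} and formula \eqref{eq:polynomialBasics}.

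\emph{Step 3: convert the zero back into an approximation.} The ``if'' direction of Proposition~\ref{prop:approx-zero}, now with order $2n-1$, shows that $N(P(\xi))$ is a polynomial approximation of $x(\xi)$ up to order $2n-1$.

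The only delicate point is making sure the base point is preserved in both directions, so that Proposition~\ref{prop:approx-zero} can be applied at orders $n-1$ and $2n-1$ around the same $(x_0,c_0)$. This is exactly what the computation of $N(P)(0)$ in Step 2 settles. Everything else is a direct chaining of results already established, so I expect no substantive obstacle.
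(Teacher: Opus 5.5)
Your proposal is correct and follows the paper's own argument, which is exactly the chain Proposition~\ref{prop:approx-zero} (approximation $\Rightarrow$ zero at order $n-1$), then Theorem A, then Proposition~\ref{prop:approx-zero} again (zero $\Rightarrow$ approximation at order $2n-1$). Your explicit check that $N(P)(0)=x_0$, needed so that Definition~\ref{def:polzero} applies, fills in a detail the paper leaves implicit.
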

\begin{remark}
    A polynomial approximation of order $n-1$ contains $n$ correct coefficients including the term of order 0. Thus, Corollary \ref{cor:double} implies that the number of correct coefficients is doubled at each step of Newton iteration.
\end{remark}

\section{Numerical application of the method}\label{sec:numericalAplication}

In this section, we deal with the practical implementation of a polynomial Newton method. In any implementation, the coefficients of the polynomial $\tilde x(\xi)$ will be stored as floating point numbers. Therefore, from a practical point of view, there will be no exact solutions of the polynomial equation, but solutions with a given error $\varepsilon$. The following definition introduces this concept.

\begin{definition}
    A polynomial $P(\xi)$ is a {\bf polynomial zero of error $\varepsilon$ up to order $n$} of a function $F$ if 
    $$F(P(\xi))=\sum_{i=0}^n a_i(\varepsilon) \xi^i + \mathcal O(\xi^{n+1}),$$
    where $a_i(\varepsilon)=\mathcal O(\varepsilon)$.
\end{definition}

For the sake of completeness let us define also the equivalent concept of a polynomial approximation up to order $n$ with some error.
\begin{definition}\label{def:polapproxError}
Let $x(\xi)$ be a sufficiently smooth function defined on a neighbourhood of $\xi=0$. We say that a polynomial $\tilde{x}_n(\xi)$ is a \textbf{polynomial approximation of error $\varepsilon$ up to order $n$} if $\tilde{x}_n(\xi)=x(\xi)+\sum_{i=0}^n b_i\xi^i+\mathcal{O}(\xi^{n+1}),$ where $b_i=\mathcal O(\varepsilon)$ for all $i=0,\dots,n$.
    
\end{definition}

The following proposition provides the equivalent to Proposition~\ref{prop:approx-zero} for the inexact case. The proof is analogous to the one of Proposition~\ref{prop:approx-zero}.

\begin{prop}\label{prop:approx-zeroError}
Let $f(x,c)$ be a sufficiently smooth function that depends a parameter $c\in\mathbb{R}$, $(x_0,c_0)$ be a simple zero of $f$, and $x(\xi)$ be the (unique) functional solution of $f(x(\xi),c_0+\xi)=0$ with $x(0)=x_0$, defined on a neighbourhood of $\xi=0$. Then, $\tilde{x}_n(\xi)$ is a polynomial approximation of error $\varepsilon$ up to order $n$ of $x(\xi)$ if, and only if, $\tilde x_n (\xi)$ is a polynomial zero of error $\varepsilon$ up to order $n$ of $f(x, c)$ around $(x_0,c_0)$.
\end{prop}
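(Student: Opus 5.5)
We will follow the two-implication structure of Proposition~\ref{prop:approx-zero}. The key fact is that a first-order Taylor expansion in $x$ turns an $\mathcal O(\varepsilon)$ perturbation of the coefficients into an $\mathcal O(\varepsilon)$ perturbation of the residual, and conversely. Throughout we treat $\varepsilon$ as a small parameter. A quantity is $\mathcal O(\varepsilon)$ coefficientwise when each of its Taylor coefficients up to order $n$ is $\mathcal O(\varepsilon)$. The remaining terms are understood as $\mathcal O(\xi^{n+1})$ with bounds uniform in $\varepsilon$.

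\emph{Approximation implies zero.} Write $\tilde x_n(\xi)=x(\xi)+B(\xi)+\mathcal O(\xi^{n+1})$ with $B(\xi)=\sum_{i=0}^n b_i\xi^i$ and $b_i=\mathcal O(\varepsilon)$. Expand $f(\cdot,c_0+\xi)$ by Taylor around $x(\xi)$ and use $f(x(\xi),c_0+\xi)=0$. This gives
$$f(\tilde x_n(\xi),c_0+\xi)=f_x(x(\xi),c_0+\xi)B(\xi)+\tfrac12 f_{xx}(\cdot)B(\xi)^2+\dots+\mathcal O(\xi^{n+1}).$$
Multiplying the Taylor polynomial of $f_x(x(\xi),c_0+\xi)$, whose coefficients are independent of $\varepsilon$, by $B$ gives coefficients that are finite linear combinations of the $b_j$. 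These coefficients are therefore $\mathcal O(\varepsilon)$ by the product formula \eqref{eq:polynomialBasics}. The higher-order terms involve $B^k$ with $k\ge 2$, so they are $\mathcal O(\varepsilon^2)\subset\mathcal O(\varepsilon)$ coefficientwise. Hence $\tilde x_n$ is a polynomial zero of error $\varepsilon$ up to order $n$.

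\emph{Zero implies approximation.} Let $\tilde x_n$ be such a zero and set $D(\xi)=\tilde x_n(\xi)-x(\xi)=\sum_{i=0}^n d_i\xi^i+\mathcal O(\xi^{n+1})$. We must show each $d_i=\mathcal O(\varepsilon)$. As above, write
$$f(\tilde x_n,c_0+\xi)=f_x(x(\xi),c_0+\xi)\,D(\xi)+\tfrac12 f_{xx}(\cdot)D(\xi)^2+\dots,$$
and compare coefficients order by order, replacing the ``minimal index $i$'' argument of Proposition~\ref{prop:approx-zero} by induction on $i$. At order $i$ the coefficient equals $\alpha d_i+R_i(d_0,\dots,d_{i-1})$, where $\alpha=f_x(x_0,c_0)\neq0$. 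Here $R_i$ is a smooth function of the lower coefficients that vanishes when they all vanish, except for terms involving $d_0$ through higher powers in the nonlinear part. This coefficient is $a_i(\varepsilon)=\mathcal O(\varepsilon)$.

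At order $0$ the equation is the scalar equation $f(x_0+d_0,c_0)=a_0(\varepsilon)$. By the simple-zero hypothesis and the inverse function theorem, $d_0=\mathcal O(\varepsilon)$, provided $d_0$ is known to be small, i.e.\ $\tilde x_n(0)$ lies in the local uniqueness neighbourhood of $x_0$. We will make this locality assumption explicit, as is implicit in ``around $(x_0,c_0)$''. Inductively, if $d_0,\dots,d_{i-1}=\mathcal O(\varepsilon)$, then $R_i=\mathcal O(\varepsilon)$ because it is smooth and vanishes at $0$. Therefore $d_i=(a_i-R_i)/\alpha=\mathcal O(\varepsilon)$; this is the coefficient analogue of Lemma~\ref{lem:ordrediv}, since dividing by $\alpha$ preserves the $\mathcal O(\varepsilon)$ size.

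The main obstacle is the order-$0$ step together with keeping the expansions uniform in $\varepsilon$. Because $d_0\neq 0$ in general, the expansion point $x_0+d_0$ differs from $x_0$, so the coefficients of $f_x$ and $f_{xx}$ along $\tilde x_n$ depend on $\varepsilon$. We will handle this by expanding around $x(\xi)$ rather than around $\tilde x_n$. With that choice, all $\varepsilon$-dependence sits in $D$, and smoothness of $f$ on a fixed compact neighbourhood gives the uniform constants.
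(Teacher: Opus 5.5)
Your proposal is correct and is essentially the argument the paper intends; the paper only says the proof is analogous to that of Proposition~\ref{prop:approx-zero}. You use a Taylor expansion about $x(\xi)$ for the forward implication, and for the converse your coefficient-by-coefficient induction is the quantitative version of the paper's minimal-index argument. The locality assumption you make explicit at order $0$ is genuinely necessary: for $f=x^2+x+c$ at $(0,0)$, the Taylor polynomial of the root branch through $x=-1$ is a polynomial zero of every order but not an approximation of $x(\xi)$. It is also consistent with the hypothesis $\tilde x(0)=x_0+\mathcal O(\varepsilon)$ that the paper imposes in Theorem~\ref{teo:maineps}.

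One small correction: for $i\ge 1$ the factor multiplying $d_i$ in the order-$i$ coefficient is $f_x(x_0+d_0,c_0)$ rather than $\alpha$, because the nonlinear terms contribute $d_0^{k-1}d_i$. After the order-$0$ step this factor is $\alpha+\mathcal O(\varepsilon)$, so $d_i=(a_i-R_i)/f_x(x_0+d_0,c_0)=\mathcal O(\varepsilon)$ and your induction goes through unchanged.
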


The following remark expands the evaluation of $f(\tilde x(\xi),c_0+\xi)^k$. It will be useful in the proof of Theorem~\ref{teo:maineps}, which is the analogous to  Theorem~A in the inexact case.

\begin{remark} \label{rmk:poly0orNeps}
    Observe that, the $k$-th power of $f$ evaluated at a polynomial zero of error $\varepsilon$ up to order $n$, $\tilde x(\xi)$, verifies:
    $$
    \begin{array}{rcl}
    f(\tilde x(\xi),c_0+\xi)^k&=&\displaystyle \left( \sum_{i=0}^n a_i(\varepsilon) \xi^i + \mathcal O(\xi^{n+1})\right)^k=\\
    &=&\displaystyle  \left( \sum_{i=0}^n a_i(\varepsilon) \xi^i\right)^k + k  \left( \sum_{i=0}^n a_i(\varepsilon) \xi^i\right)^{k-1}\mathcal O(\xi^{n+1}) \\
    &&\displaystyle + \sum _{j=2}^k \begin{pmatrix}k\\j\end{pmatrix}\left( \left( \sum_{i=0}^n a_i(\varepsilon) \xi^i\right)^{k-j} \mathcal O(\xi^{n+1})^j\right).
    \end{array}
    $$
    
    Hence, all the elements in the first term contain a $k$-tuple of products of $a_i(\epsilon)$. Therefore, all the coefficients will be of order $\mathcal O(\varepsilon^k)$. For the elements on the second term, they will have $(k-1)$-tuples of products. Hence, the coefficients are of order $\mathcal O(\varepsilon^{k-1})$. In addition, the smallest order in $\xi$ that appear is $n+1$. From that point on, the smallest terms in $\xi$ that appear are of order $2n+2$.
\end{remark}

The following theorem explores how the errors propagate through the coefficients of polynomial evaluation. In particular, the polynomial Newton method behaves quadratically in the error and in the number of coefficients:

\begin{theorem}\label{teo:maineps}
Let $f(x,c)$ be a sufficiently smooth function depending on a parameter $c\in\mathbb{R}$. Let $(x_0,c_0)$ be a simple zero of $f(x,c)$.    If $\tilde x(\xi)$ is a polynomial zero of error $\varepsilon$ up to order $n$ of a function $f(\cdot, c_0+\xi)$ such that $\tilde x(0)=x_0+\mathcal O(\varepsilon)$, then:
    \begin{enumerate}
        \item $N(\tilde x(\xi))$ is a polynomial zero of error $\varepsilon^2$ up to order $n$  for $f(\cdot, c_0+\xi)$.
        \item $N(\tilde x(\xi))$ is a polynomial zero of error $\varepsilon$ up to order $2n+1$ for $f(\cdot, c_0+\xi)$.
    \end{enumerate}
\end{theorem}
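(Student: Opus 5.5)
We follow the proof of Theorem A: Taylor-expand $f(\cdot,c_0+\xi)$ around $\tilde x(\xi)$, now keeping track of powers of $\varepsilon$ as well as powers of $\xi$. Write
\[
F(\xi):=f(\tilde x(\xi),c_0+\xi)=A(\xi)+R(\xi),\qquad A(\xi)=\sum_{i=0}^n a_i(\varepsilon)\xi^i,\quad R(\xi)=\mathcal O(\xi^{n+1}),
\]
with $a_i(\varepsilon)=\mathcal O(\varepsilon)$. Since $\tilde x(0)=x_0+\mathcal O(\varepsilon)$ and $f_x(x_0,c_0)=\alpha\neq 0$, for $\varepsilon$ small we get $f_x(\tilde x(\xi),c_0+\xi)=\alpha+\mathcal O(\varepsilon)+\mathcal O(\xi)$, whose constant term is nonzero. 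Lemma~\ref{lem:ordrediv} (and the quotient recursion \eqref{eq:polynomialBasics}) therefore apply to
\[
\widetilde P(\xi)=F(\xi)/f_x(\tilde x(\xi),c_0+\xi).
\]
The key structural fact is that division by a series with $\mathcal O(1)$ coefficients and invertible constant term is linear in the numerator. Hence $\widetilde P=A/f_x+R/f_x$, where the first summand has all coefficients $\mathcal O(\varepsilon)$ and the second is $\mathcal O(\xi^{n+1})$.

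Next expand, exactly as in Theorem A:
\[
f(N(\tilde x),c_0+\xi)=F-f_x\widetilde P+\tfrac12 f_{xx}\widetilde P^2+\sum_{k\ge3}\tfrac{1}{k!}\partial_x^kf\,(-\widetilde P)^k .
\]
The first two terms cancel identically, since $f_x\widetilde P=F$ in the jet algebra. What remains is a sum of terms of the form $\mathcal O(1)\cdot \widetilde P^k$ with $k\ge 2$, and each $\widetilde P^k$ is a series times $F^k$. We then use Remark~\ref{rmk:poly0orNeps}: $F^k=A^k+kA^{k-1}R+\dots$. The pieces have the following sizes.
\begin{itemize}
\item $A^k$ has all coefficients $\mathcal O(\varepsilon^k)\subset\mathcal O(\varepsilon^2)$.
\item $kA^{k-1}R$ has coefficients $\mathcal O(\varepsilon^{k-1})\subset \mathcal O(\varepsilon)$ and starts at order $\xi^{n+1}$.
\item All remaining terms contain $R^j$ with $j\ge2$, so they are $\mathcal O(\xi^{2n+2})$.
\end{itemize}

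Statement 1 follows by truncating at order $n$: only $A^k$ contributes to the coefficients of $\xi^0,\dots,\xi^n$, so these coefficients are $\mathcal O(\varepsilon^2)$. Statement 2 follows by truncating at order $2n+1$: the coefficients up to that order come from $A^k$ (order $\varepsilon^2$) and from $A^{k-1}R$ with $k\ge 2$ (order $\varepsilon^{k-1}$, at least $\varepsilon$). Every coefficient up to $\xi^{2n+1}$ is therefore $\mathcal O(\varepsilon)$, and the remainder is $\mathcal O(\xi^{2n+2})$.

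The main obstacle is rigour in the infinite Taylor sum and in the mixed $\mathcal O(\varepsilon)$/$\mathcal O(\xi)$ bookkeeping. I would handle it by using a Taylor expansion with finite remainder of order $K=3$ (or larger): the remainder is bounded by a constant times $|\widetilde P|^3$. Each truncated coefficient is then a finite polynomial expression in the $a_i(\varepsilon)$ with coefficients that are uniformly bounded for $\varepsilon$ small. Uniformity in $\varepsilon$ of the constants, including the inverse of $f_x$, comes from $\tilde x(0)\to x_0$ and continuity of the derivatives of $f$.

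One point needs care. For $k=2$, the term $A\,R$ gives only $\mathcal O(\varepsilon)$ in the orders $n+1,\dots,2n+1$, not $\varepsilon^2$. This is exactly why statement 2 claims only error $\varepsilon$, and we must check that nothing worse than $\mathcal O(1)\cdot R^2=\mathcal O(\xi^{2n+2})$ appears without an $\varepsilon$ factor.
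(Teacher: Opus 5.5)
Your proposal is correct and follows essentially the same route as the paper: Taylor-expand around $\tilde x(\xi)$, cancel the linear term, and split $F^k$ via Remark~\ref{rmk:poly0orNeps} into the $A^k$, $A^{k-1}R$ and $R^j$ ($j\ge 2$) pieces to read off the $\varepsilon$-order of each coefficient up to $\xi^{n}$ and $\xi^{2n+1}$. Your remarks on uniformity in $\varepsilon$ (e.g.\ $f_x(\tilde x(0),c_0)=\alpha+\mathcal O(\varepsilon)\neq 0$) make explicit what the paper leaves implicit; the one thing to tighten is that coefficient bounds need the integral-form remainder, written as $\widetilde P^3\,g(\xi)$ with $g$ smooth, rather than a pointwise bound by $C|\widetilde P|^3$.
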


\begin{proof}
    Let $\tilde x(\xi)$ be a polynomial zero of error $\varepsilon$ up to order $n$. Then: 
    $$f(\tilde x(\xi),c_0+\xi) = \sum_{i=0}^n a_i(\varepsilon) \xi^i + \mathcal O(\xi^{n+1}).$$
    We consider the next iterate trough the Jet Newton procedure:
    $$\tilde y(\xi)=N(\tilde x(\xi))=\tilde x(\xi)- \dfrac{f(\tilde x(\xi),c_0+\xi)}{f_x(\tilde x(\xi),c_0+\xi)}.$$
    Evaluating in $f(\cdot,c_0+\xi)$ and taking the Taylor expansion around $(\tilde x(\xi),c_0+\xi)$ it gives:
    $$
    \begin{array}{rcl}
    f(\tilde y(\xi),c_0+\xi)&=&f\left(\tilde x(\xi)- \dfrac{f(\tilde x(\xi),c_0+\xi)}{f_x(\tilde x(\xi),c_0+\xi)},c_0+\xi \right)\\
    &=&f(\tilde x(\xi),c_0+\xi) + f_x(\tilde x(\xi),c_0+\xi) \dfrac{-f(\tilde x(\xi),c_0+\xi)}{f_x(\tilde x(\xi),c_0+\xi)} \\
    &&+ \dfrac{1}{2}f_{xx}(\tilde x(\xi),c_0+\xi) \left(\dfrac{-f(\tilde x(\xi),c_0+\xi)}{f_x(\tilde x(\xi),c_0+\xi)}\right)^2 + \mathcal O\left(\left(\dfrac{-f(\tilde x(\xi),c_0+\xi)}{f_x(\tilde x(\xi),c_0+\xi)}\right)^3\right)\\
    &=&\dfrac{f_{xx}(\tilde x(\xi),c_0+\xi)}{2f_x(\tilde x(\xi),c_0+\xi)^2} f(\tilde x(\xi),c_0+\xi)^2 + \mathcal O\left(\left(\dfrac{-f(\tilde x(\xi),c_0+\xi)}{f_x(\tilde x(\xi),c_0+\xi)}\right)^3\right).
    \end{array}
    $$

    The term $f_{xx}(\tilde x(\xi),c_0+\xi)/2f_x(\tilde x(\xi),c_0+\xi)^2$ can be written as $q(\xi):=e_0+\sum_{i=1}^\infty e_i\xi^i$,  where the coefficients $e_i$ depend on the derivatives of $f$ around $(x_0,c_0)$. 
    Then, $ f(\tilde y(\xi),c_0+\xi)$ can be written as

    \begin{eqnarray}
    f(\tilde y(\xi),c_0+\xi)&=&\displaystyle q(\xi) \left( \sum_{i=0}^n a_i(\varepsilon) \xi^i + \mathcal O(\xi^{n+1})\right)^2 + \mathcal O\left(\left(\dfrac{-f(\tilde x(\xi),c_0+\xi)}{f_x(\tilde x(\xi),c_0+\xi)}\right)^3\right) \nonumber\\
    &=& \displaystyle  q(\xi) \left(\left( \sum_{i=0}^n a_i(\varepsilon) \xi^i\right)^2 + 2\left( \sum_{i=0}^n a_i(\varepsilon) \xi^i\right) \mathcal O(\xi^{n+1}) + \left( \mathcal O(\xi^{n+1})\right)^2\right) \label{eq:firsteps}
    \\&&+ \mathcal O\left(\left(\dfrac{-f(\tilde x(\xi),c_0+\xi)}{f_x(\tilde x(\xi),c_0+\xi)}\right)^3\right). \label{eq:secondeps}
    \end{eqnarray}

Observe that, those terms that do not contain $\mathcal O(\varepsilon^2)$ in \eqref{eq:firsteps} are 
    $$\displaystyle  \left(e_0+\sum_{i=1}^\infty e_i\xi^i \right) \left( 2 \sum_{i=0}^n a_i(\varepsilon) \xi^{i+n+1} 
        + \left( \mathcal O(\xi^{2n+2})\right)\right).
        $$
From the higher order terms in \eqref{eq:secondeps}, by Remark~\ref{rmk:poly0orNeps}, the terms which are not of the form $\mathcal O(\varepsilon^2)$ are of order higher than $n+1$. Therefore, the lowest order in $\xi$ whose coefficient is not of the form $\mathcal O(\varepsilon^2)$ is $n+1$. This proves the first statement.

Analogously, those terms that are not of the form  $\mathcal O(\varepsilon)$ are given by
$$\displaystyle  \left(e_0+\sum_{i=1}^\infty e_i\xi^i \right) \left(  \mathcal O(\xi^{2n+2})\right).$$
The higher order terms in \eqref{eq:secondeps}, by Remark~\ref{rmk:poly0orNeps}, are of order greater than $2n+2$.
Therefore, the first $2n+1$ terms are at least of order $\mathcal O(\epsilon)$. This proves the second statement.
\end{proof}

\begin{remark}
    As a corollary of Theorem A we had that at each step we doubled the number of exact coefficients. Analogously, it follows from  Theorem~\ref{teo:maineps} (and Proposition~\ref{prop:approx-zeroError}) that at each step we double the number of terms whose coefficients have error of the form $\mathcal O(\epsilon)$. Indeed, if $\tilde x(\xi)$ is a polynomial zero of error $\epsilon$ up to order $n$ then, counting the therm of order zero, it has $n+1$ terms with error $\mathcal O(\epsilon)$ while $N(\tilde x(\xi))$ has $2n+2$ terms with error $\mathcal O(\epsilon)$.
\end{remark}

\section{Applications }\label{sec:applications}

To illustrate the previous algorithms and the examples shown we numerically tested the results on different nonlinear equations. Preliminary tests have shown that, due to double precision approximation errors, the coefficients of order higher than $40$ are not relevant: the errors of double precision coefficients are higher than the information provided by those terms. For that reason, at all the numerical applications we use polynomial approximations up to order $40$. 

The code used in this section can be found at \cite{soft-NewtonJet}. It has been written from scratch with the exception of the polynomial algebra that has been generated using the package Taylor2 from \cite{GimenoJZ22}.

\subsection{An academic example}
In the introduction we presented a first academic example,  Example~\ref{exa:example}.
Next, we will study the numerical estimates for this example. Recall that we consider the second degree equation $f(x,c)=x^2+x+c$, where $c\in\mathbb R$. We want to obtain the solution $x(c)$ such that $f(x(c),c)=0$ in a neighbourhood of $c=c_0$.

This example is interesting since we already know the exact solution $x(c)=\dfrac{-1+\sqrt{1-4c}}{2}$ and its Taylor approximation around $c_0=0$ is:
    $$
    x(c_0+\delta_c)=\sum_{i\geq 0} x_{[i]} \delta_c^i =-\delta_c - \delta_c^2 -2\delta_c^3 - 5\delta_c^4 - 14\delta_c^5 -42\delta_c^6 - 132\delta_c^7 - 429\delta_c^8-1430\delta_c^9+ \mathcal O (\delta_c^{10}),
    $$
where $\delta_c\in \mathbb R$ denotes the increment with respect to the parameter $c_0$ and $x_{[i]}$ denotes the coefficient of $\delta_c^i$.
We start the method with the initial polynomial $P_0(\xi)=x_0$, where $x_0\in \mathbb R$ is an initial approximation to the solution of $f(x,c_0)=0$. After applying the Newton algorithm $n$ times we get the polynomials $P_n(\xi)=\sum_{i=0}^N p_{n,[i]} \xi^i$. Each iterate of the Newton method is given by:
$$P_{n+1}(\xi)= P_n(\xi) - \dfrac{P_n(\xi)^2+ P_n(\xi) + \xi}{2\cdot P_n(\xi) + 1}.$$
Recall that those computations are done using polynomial arithmetic.
We will consider two different values for $x_0$. We first consider $x_0=0$, which corresponds to the exact solution at $c=0$. Therefore, our initial polynomial is $P_0(\xi)=0$. We then apply Newton's algorithm to jets. The left plot of Figure~\ref{fig:academicjets} shows the evolution of the logarithm of the error of the coefficients with respect to the correct values. Since the constant term in the Taylor's expansion $x(c_0+\delta_c)$ is 0 we will use the absolute error, i.e. $|x_{[0]}-p_{n,[0]}|$. For all the other terms the figures shows relative errors, i.e. $|x_{[i]}-p_{n,[i]}|/|x_{[i]}|$. In general terms, along the paper we use the absolute error whenever the value of the correct value is below 1 and the relative one when it's over 1. The relative error was chosen for this comparison because both values possess large integer parts. Consequently, the absolute error is dominated by the difference in these integers, obscuring the convergence of the significant digits and yielding a misleadingly high value.
This serves as a measure of the polynomial expression's accuracy at each iteration. Red values indicate errors exceeding the coefficient's magnitude, while black values represent errors below double floating-point precision. The colour scale is fixed such that each Newton iteration advances one colour level. The plot illustrates how at each iteration the number of correct coefficients is doubled. Afterwards, we consider as initial approximation $x_0=0.1$. This is done to assess the numerical stability of the method. The right plot of Figure~\ref{fig:academicjets} shows the evolution for $P_0(\xi) =x_0=0.1$ (an initially perturbed polynomial solution). We observe the expected quadratic convergence behaviour (error squaring) in each column. Additionally, we obtain twice as many coefficients with comparable error to the previous iteration.

\begin{figure}
    \centering
    \includegraphics[width=0.49\linewidth]{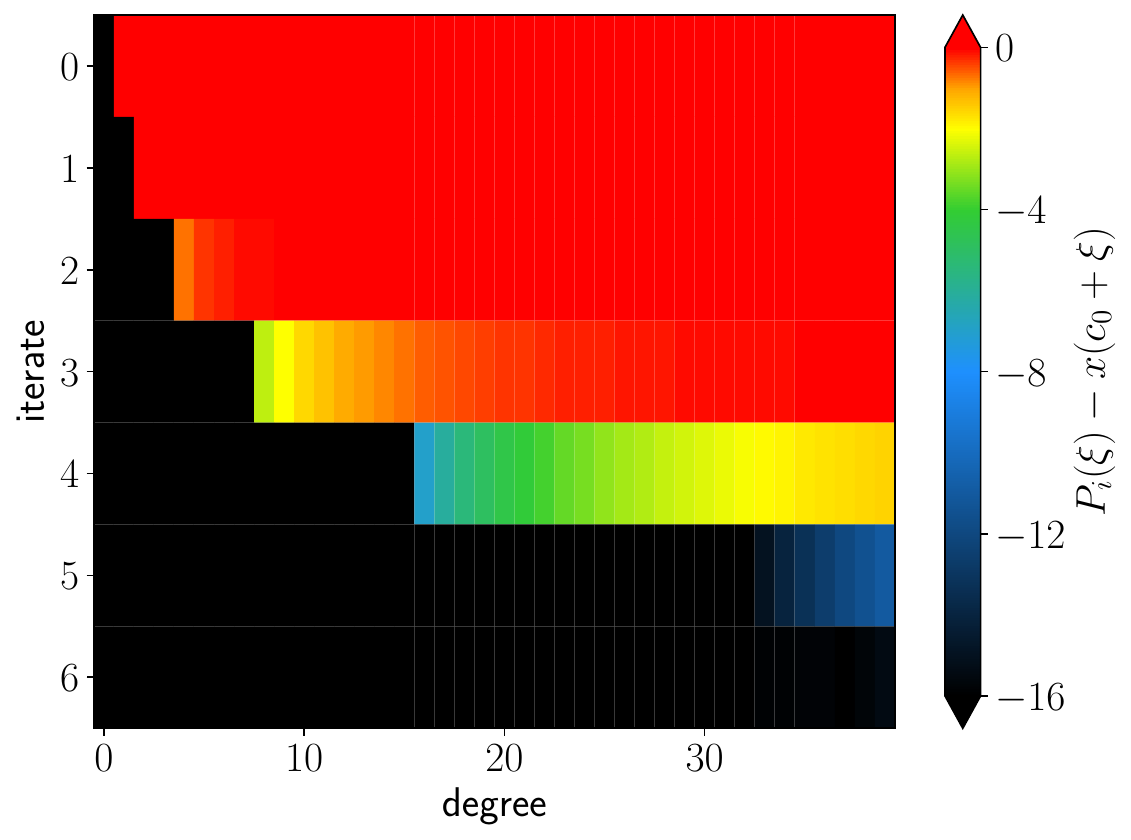}
    \includegraphics[width=0.49\linewidth]{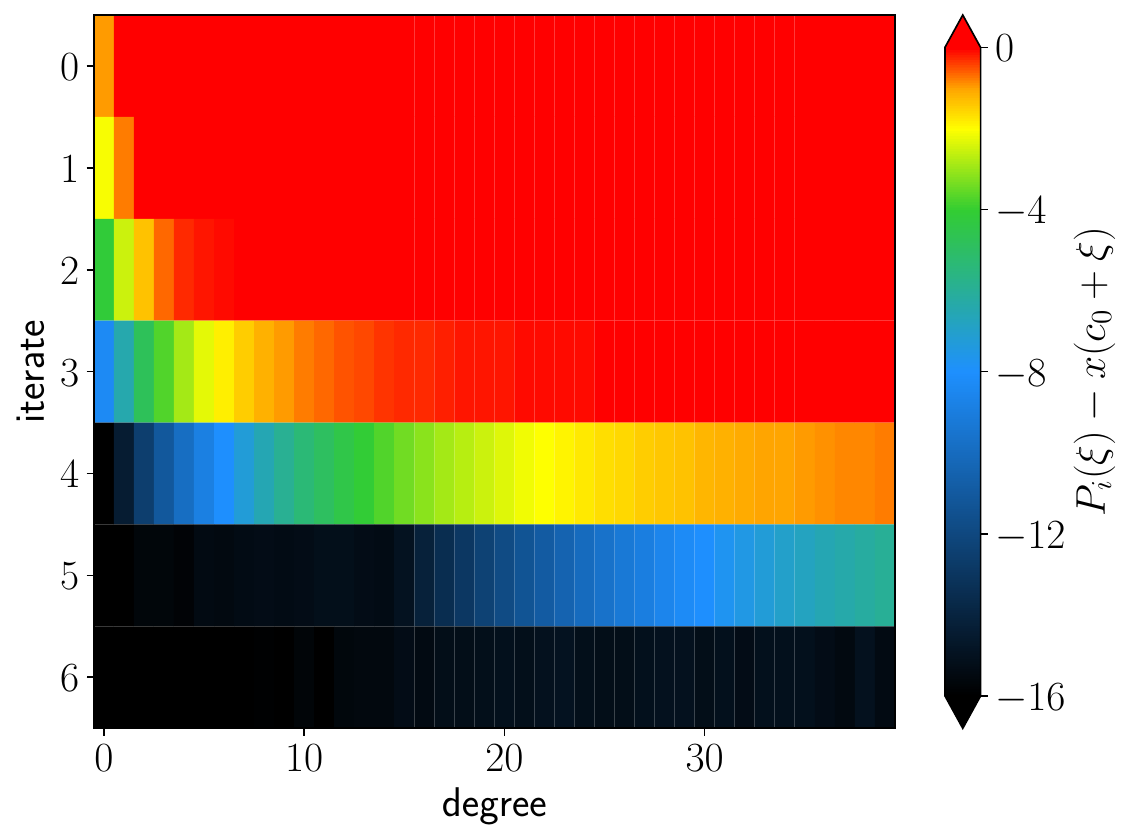}
    \caption{Error evolution of the coefficients for the initial condition $x_0=0$ (left) and $x_0=0.1$ (right) for the quadratic equation example. Each row represents the iterate ($i$) while each column the degree of the coefficient. Absolute error is used for degree 0 and relative error is used for all the other degrees.}
    \label{fig:academicjets}
\end{figure}

In order to assess the numerical application, we evaluate the range of values $\xi$ where the polynomial is correct (up to a given tolerance). In Figure~\ref{fig:academic_evalP} we plot in purple the result of evaluating the polynomial $P_6(\xi)$ compared against the result of $x(\xi)$. Notice from Figure~\ref{fig:academicjets} that all the coefficients of $P_6(\xi)$ up to order 40 are correct up to the double precision level independently of whether we take $P_0(\xi)=0$ or $P_0(\xi)=0.1$. Nonetheless, for this figure, we use $P_0(\xi)=0$. The difference between $P_6(\xi)$ and $x(\xi)$ is shown in decimal logarithm base. In green we show the logarithm of evaluating $P_6(\xi)$ at the function, i.e. $f(P_6(\xi),c_0+\xi)$, where $c_0=0$. This is done by using a grid of points in the interval $[-0.25,0.25]$. Notice that the theoretical solution is $x(c)=(-1+\sqrt{1-4c})/2$, so the radius of convergence of the Taylor series cannot go beyond $c=1/4$.
Within the plot, we observe how the error grows as we approach such a limit for the radius of convergence of the Taylor series. Nonetheless, this error is also affected by the truncation of the polynomial. Therefore, the plot reveals both the Taylor series' radius of convergence and the truncation effects. We define the \emph{effective radius of convergence} as the maximum $\xi$ for which $|f(P_6(\xi), c_0+\xi)| < \texttt{tol}$, where \texttt{tol} is a prescribed tolerance threshold.

\begin{figure}
    \centering
    \includegraphics[width=\linewidth]{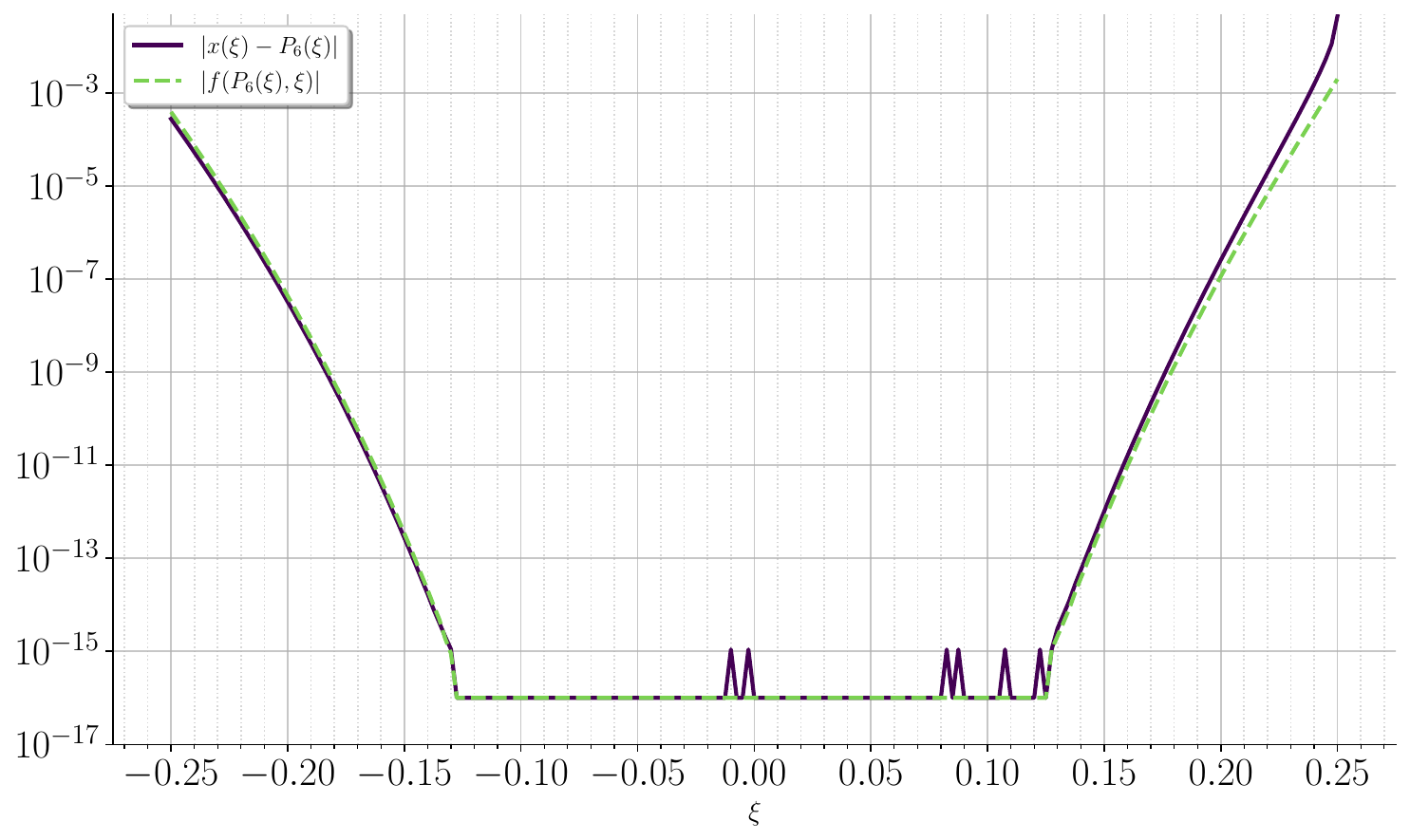}
    \caption{Difference between $P_6(\xi)$ and the function $x(\xi)$ evaluated at values $\xi$ up to the effective radius of convergence of the Polynomial (purple). Evaluation of $f(P(\xi),c_0+\xi)$ up to the effective radius of convergence of the polynomial (green).}
    \label{fig:academic_evalP}
\end{figure}

The final test consists of evaluating the function $f(P_i(\xi),\xi)$ as a jet variable, that is, we consider $f(P_i(\xi),\xi)$ as a polynomial in $\xi$. Theorem~\ref{teo:maineps} states that an initial seed which is a polynomial zero of order $0$ and error $\varepsilon$ generates an $n$-th iterate that is a polynomial zero of order $2^n$ and error $\varepsilon$. Consequently, for an exact initial seed ($\varepsilon=0$), the first $2^n$ coefficients of the $n$-th iterate are below a given tolerance.

Figure~\ref{fig:academic_evalPQuad} (top row) shows the evolution of these coefficients: the left panel corresponds to the initial condition $x_0 = 0$, while the right panel uses $x_0 = 0.1$ (matching the polynomial evaluations in Figure~\ref{fig:academicjets}). As expected, iterations double the number of vanishing coefficients for $x_0 = 0$ and exhibit quadratic convergence for $x_0 = 0.1$. However, the final iterates deviate from this behaviour due to limitations in double floating-point precision. 
The apparent zero error in the high-order Jet coefficients during the initial iterations is an initialization artifact. Since the Jet variables are initialized with zero coefficients, their initial error is necessarily zero. These values are artificial and do not reflect the true convergence of the method. A robust measurement protocol should therefore ignore these artifacts by defining the polynomial error based on the lowest-order coefficient that has an error above a predefined tolerance: when looking at Figure~\ref{fig:academic_evalPQuad}, the validity of a value is given by the worst colour at its left (including itself).

To study the limits due to double-precision arithmetic,  we repeated the test with quadruple precision (bottom row of Figure~\ref{fig:academic_evalPQuad}). The extended colour range (up to $10^{-32}$) reveals that coefficients near degree 32 have errors of $10^{-16}$. This occurs because terms of order $10^{16}$ lose 16 digits to numerical cancellation during arithmetic operations.  
\begin{figure}
    \centering
    \includegraphics[width=0.49\linewidth]{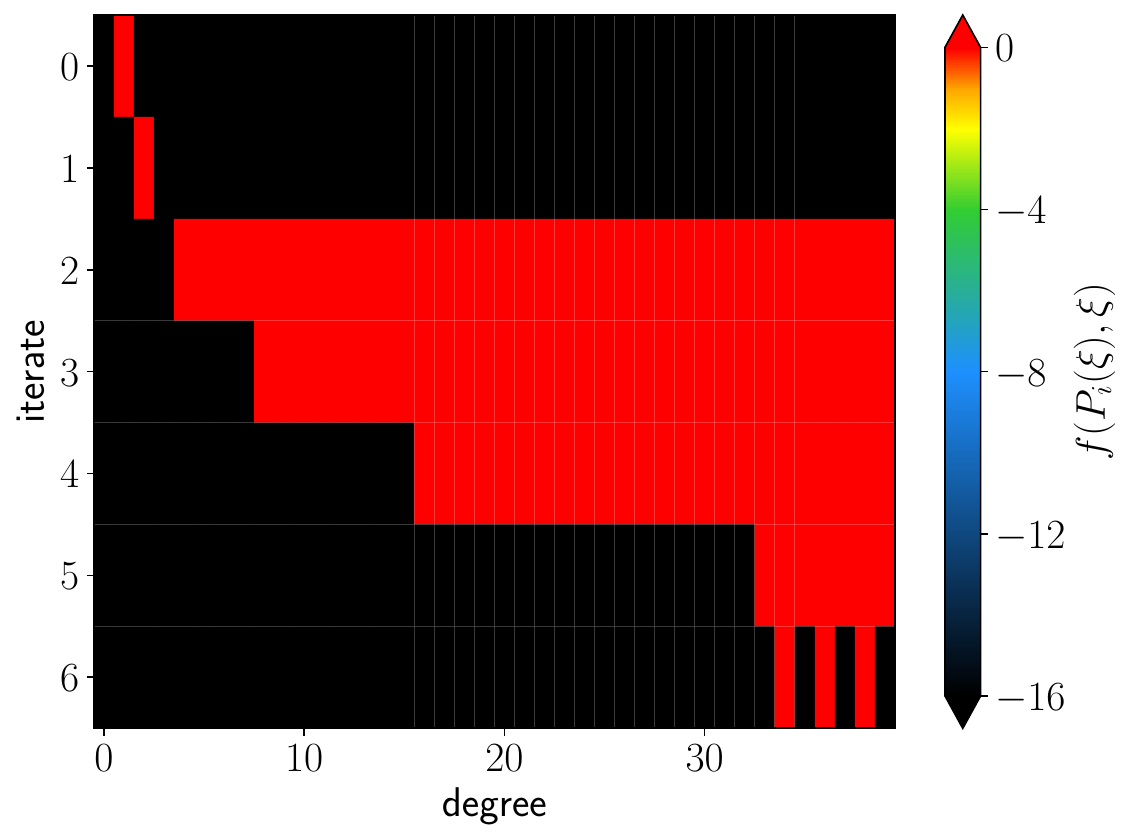}
    \includegraphics[width=0.49\linewidth]{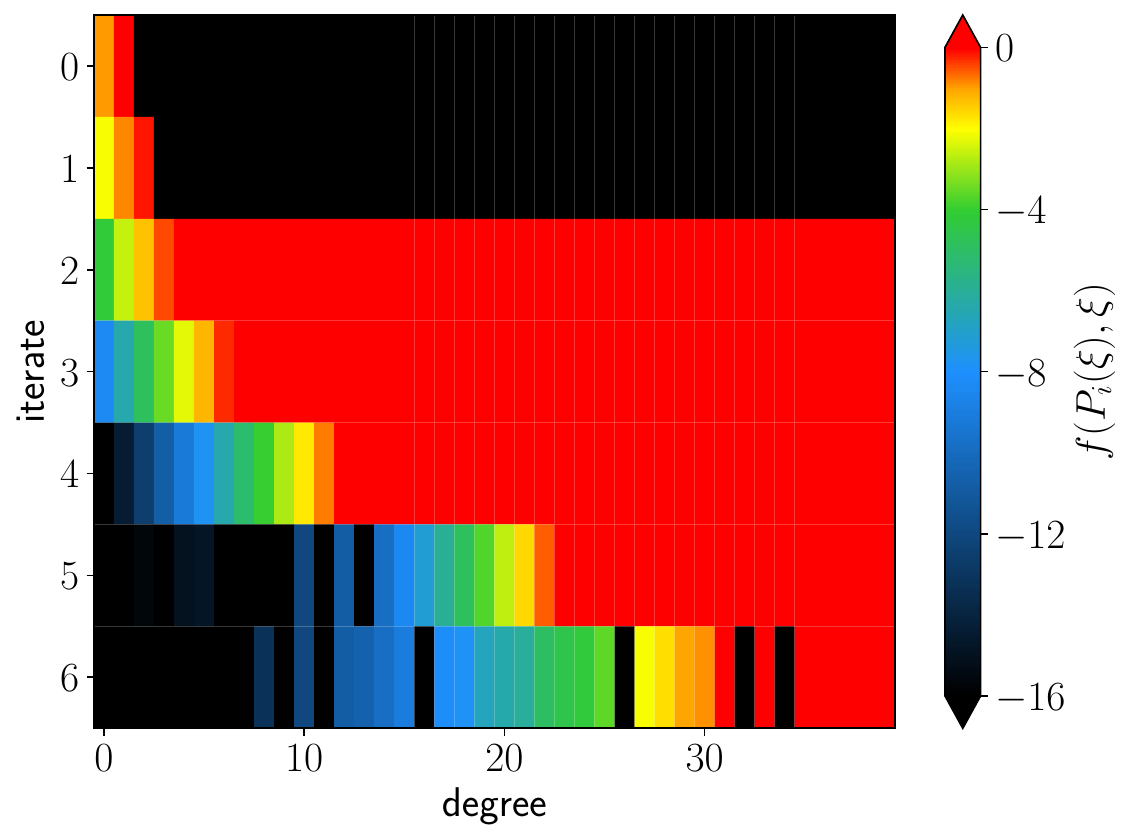}
    \includegraphics[width=0.49\linewidth]{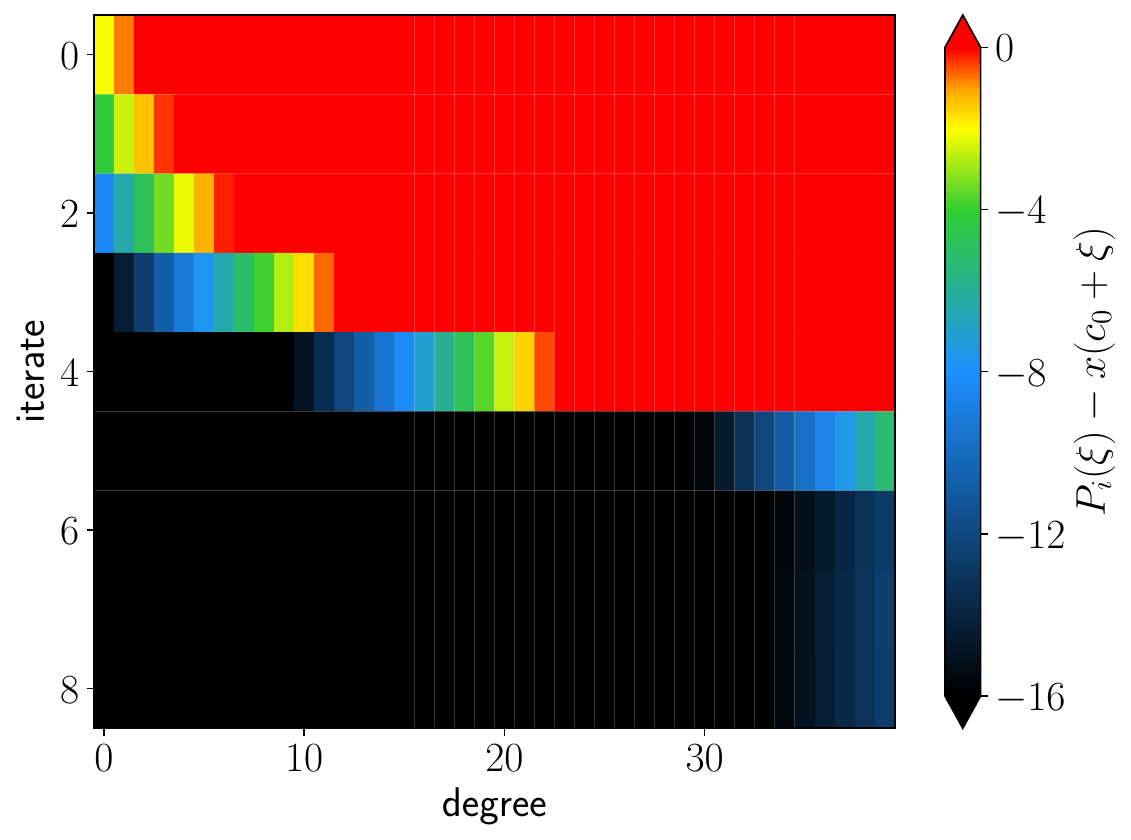}
    \includegraphics[width=0.49\linewidth]{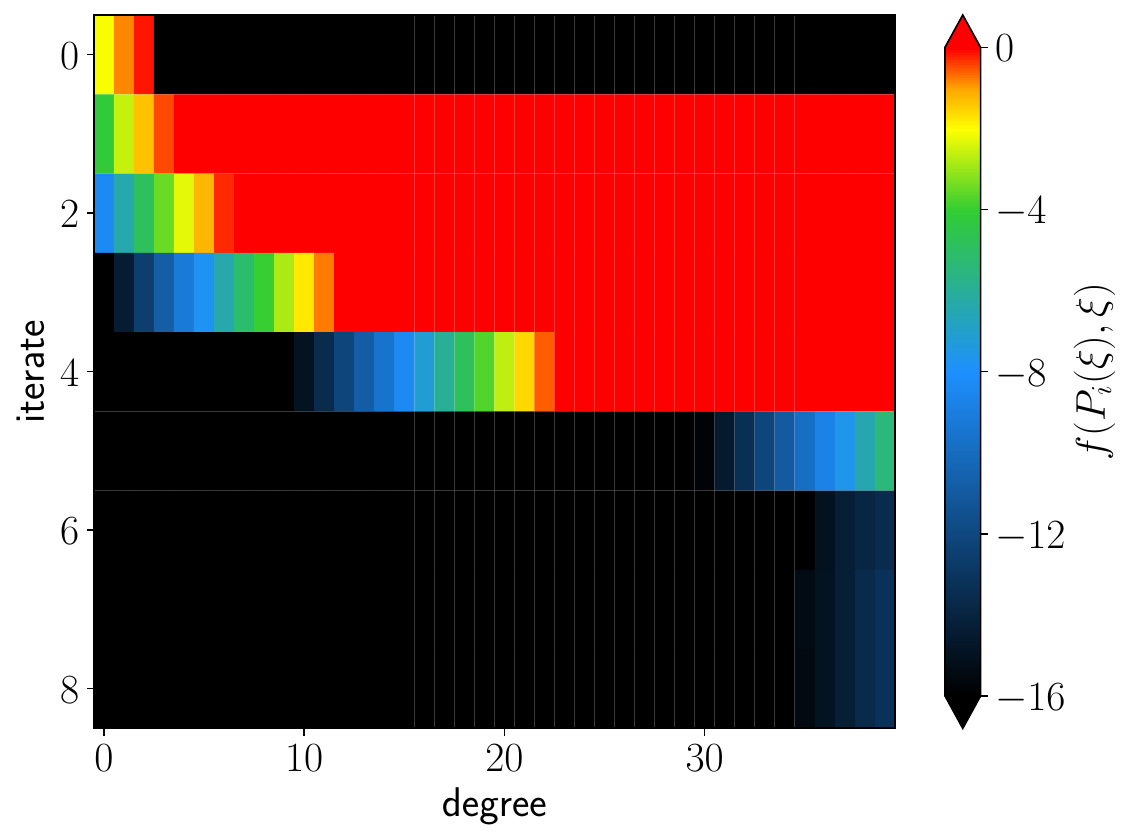}
    \caption{Left: Absolute error of the coefficients of the polynomial $x(\delta c)$ for an initial $x_0=0.1$. Right: coefficients of the evaluation of $f(P(\delta c),\delta c)$. Both computations are done in quadruple precision.}
    \label{fig:academic_evalPQuad}
\end{figure}

\subsection{Kepler equation}
The second example is a well known equation in celestial mechanics: the Kepler's equation \cite{kepler1609}. It relates the mean anomaly ($M$) with the eccentric anomaly ($E$) through the eccentricity ($e$) of a planar elliptic orbit in the context of a two body problem:
$$M=E- e \sin(E).$$
Given a celestial body $P$ (for instance a planet) rotating around a massive body $S$ (for instance the Sun) in an elliptical orbit of eccentricity $e$ and semimajor axis $a$, the eccentric anomaly is defined as the angle between $S$, the center of the ellipse, and the perpendicular projection of $P$ on the circle of centre $O$ and radius $a$. The mean anomaly is the fraction of an elliptical orbit's period that has elapsed since the orbiting body passed the periapsis. Figure~\ref{fig:kepleresEquationRepres} shows the definition of both angles. For more information on the Kepler's equation see \cite{Danby88}.

\begin{figure}
    \centering
       \includegraphics[width=0.5\linewidth]{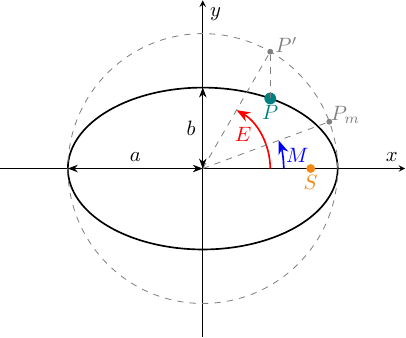}
    \caption{Scheme of the elements contained in Kepler's equation.}
    \label{fig:kepleresEquationRepres}
\end{figure}

It is easy to get the mean anomaly once the eccentric anomaly is known. However,  to obtain the eccentric anomaly from the mean anomaly is not trivial. In order to get the value of $E$ for different values of $M$ the following equation is considered:
$$
f_K(E, M)= E- e\sin(E) - M=0.
$$
Therefore, solving $f_K(E,M_0)=0$ for a given mean anomaly $M_0$ determines the value of the eccentric anomaly $E$. Using the Jet transport in the Newton Method we can obtain an approximation of the expression of $E(\xi)$ such that $f_K(E(\xi),M_0+\xi)=0$ for a given value of $M_0$. To do so, we consider an initial condition $E_0=\varepsilon$, $\varepsilon=0$ and $\varepsilon=0.1$,  and apply Newton's-Jet method:
$$
E_i= E_i - \dfrac{f_K(E_i,M_0+\xi)}{f'_K(E_i,M_0+\xi)}.
$$

Figure~\ref{fig:keplerCoefs} shows the convergence of the coefficients of the polynomial $f_K(E_i(\xi),M_0+\xi)$. In that case, we cannot use the value of the exact function $E(\xi)$ to validate the results since it is not known. Observe that since the equation that we want to solve contains only odd terms, the solution only contains odd terms. Therefore, at each Newton step, we advance a an additional degree in the Jet of the solution. The left plot shows the evolution of the method for the initial condition $E_0=0$ which for $M_0=0$ gives a solution of Kepler's equation. The right plot shows the evolution for the initial condition $E_0=0.1$. Therefore, we observe the convergence to the solution of the independent term. Once the independent term converges, the higher order terms in $\xi$ converge as expected. The fact that the solution only contains odd terms is also reflected, although the convergence is slower than in the exact initial condition (left plot). As shown in Figure~\ref{fig:academic_evalPQuad}, the initial iterate exhibits low error for both the initial value and high-order terms. This is, again, an initialization artifact. The validity of the convergence of the Jet must be assessed from the lowest-order term whose error exceeds a given tolerance. For instance, at the first iterate in the figure, the highest error is in the third-order term. This error invalidates the accuracy of all higher-order coefficients, as the solution cannot be considered correct at a higher order if it is incorrect at a lower one.

\begin{figure}
    \centering
    \includegraphics[width=0.49\linewidth]{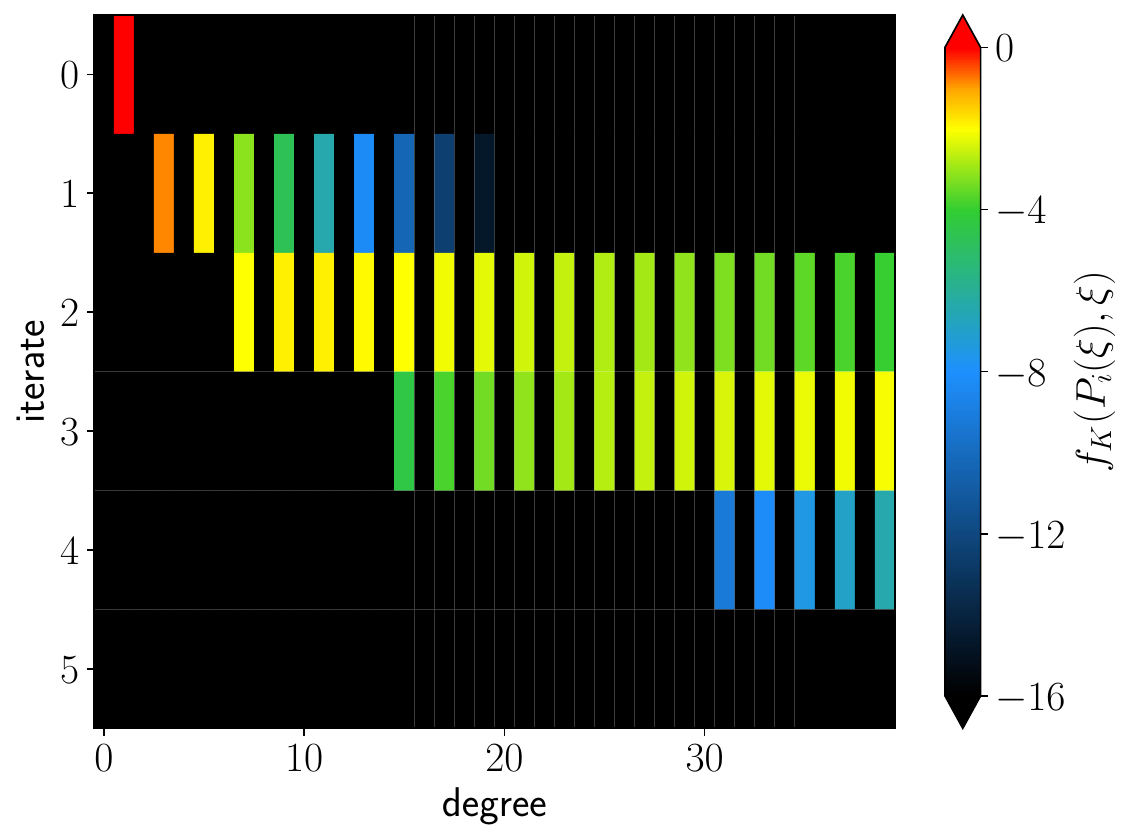}
    \includegraphics[width=0.49\linewidth]{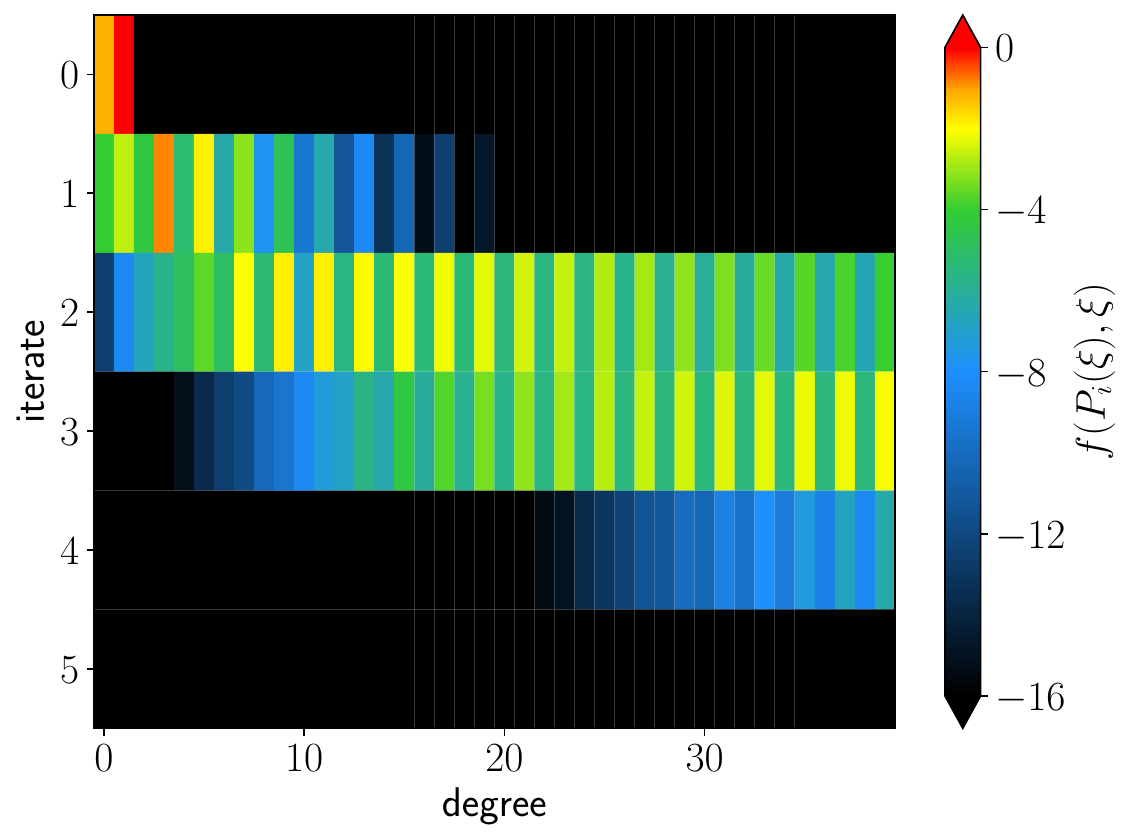}
    \caption{Taylor series coefficients of the Kepler function \( f_K(E(\xi),\xi) \), propagated using Jet transport. \textbf{Left:} Coefficients starting from the initial condition \( E_0 = 0 \). \textbf{Right:} Coefficients starting from \( E_0 = 0.1 \). The mean anomaly is fixed at \( M_0 = 0 \) in both cases.}
    \label{fig:keplerCoefs}
\end{figure}

\subsection{Zero Velocity curves in the Restricted Three Body Problem} \label{sec:zerovelocity}
The circular restricted three body problem (CR3BP) is a classical problem in celestial mechanics (see \cite{szebehely_theory_1967,kolomaro}). It studies the movement of a massless object subject to the gravitational attraction of two massive bodies called primary bodies. Both massive bodies move in circular orbits around their gravity centre. In order to simplify the model, the system is considered in rotating coordinates and, hence, the positions of the primaries are fixed. In addition, the unit of distance is defined in such way that the distance between the primaries is one and the unit of time is fixed in such way that their angular velocity (in the non-rotating frame) is 1. The mass unit is normalized so that $m_1 + m_2=1$, where $m_1$ and $m_2$ are the masses of the primaries. Consequently, the system is fully described by a single parameter, the mass ratio $\mu=m_2/(m_1+m_2)$.

Under this assumptions, there is a constant of motion called {\it Jacobi's Constant}, $C$, given by 
\begin{equation}\label{eq:jacobi}
C(x, y, v_x , v_y):=
-(v_{x}^{2} + v_{y}^{2}) + (x^{2}+y^{2})+ \frac{2(1-\mu)}{r_1}+\frac{2\mu}{r_2}+ \frac{\mu(1-\mu)}{2}.
\end{equation}
Where $r_1=\sqrt{(x+1-\mu)^2+y^2}$ and $r_2=\sqrt{(x-\mu)^2+y^2}$ are the distances to the larger and smaller primary, respectively.

The Jacobi's constant is related to the energy of the system, containing a kinetic term $v_x^2+v_y^2=v^2$ and a potential term, which corresponds to the remaining elements. The kinetic term is always positive. Therefore, for a given value of $C$ fixed, the velocity is given by:
\begin{equation}\label{eq:zeroVelcurve}
F_{C,\mu}(x,y):=v^2 = (x^{2}+y^{2})+ \frac{2(1-\mu)}{r_1}+\frac{2\mu}{r_2}+\frac{\mu(1-\mu)}{2}-C.
\end{equation}

Since $v^2$ cannot be negative, Equation~\ref{eq:zeroVelcurve} imposes a restriction on the configuration space (the $x-y$ plane). Any value $(x,y)$ such that $F_{C,\mu}(x,y) < 0$ cannot be accessed. For a given pair $(C, \mu)$, the massless object cannot enter this forbidden region.  
For fixed values of $C$ and $\mu$, the set $F_{C,\mu}(x,y) = 0$ defines the zero-velocity curve associated with the constant $C$. This curve is of particular interest in celestial mechanics, as it represents the boundary of the feasible region of motion (see \cite{szebehely_theory_1967} for further details).  
Figure~\ref{fig:0VelCurveLevelCurves} shows the zero-velocity curves defined by $ F_{C,0.1}(x,y) = 0$ in the $x-y$ plane for various values of $C$. This example aims to derive a parametric representation $(\xi, y(\xi))$ for a selected curve from this family. As an example we will compute the parametrization for $C=3.5$. This curve is highlighted in red in Figure~\ref{fig:0VelCurveLevelCurves}.
\begin{figure}
    \centering
    \includegraphics[width=0.5\linewidth]{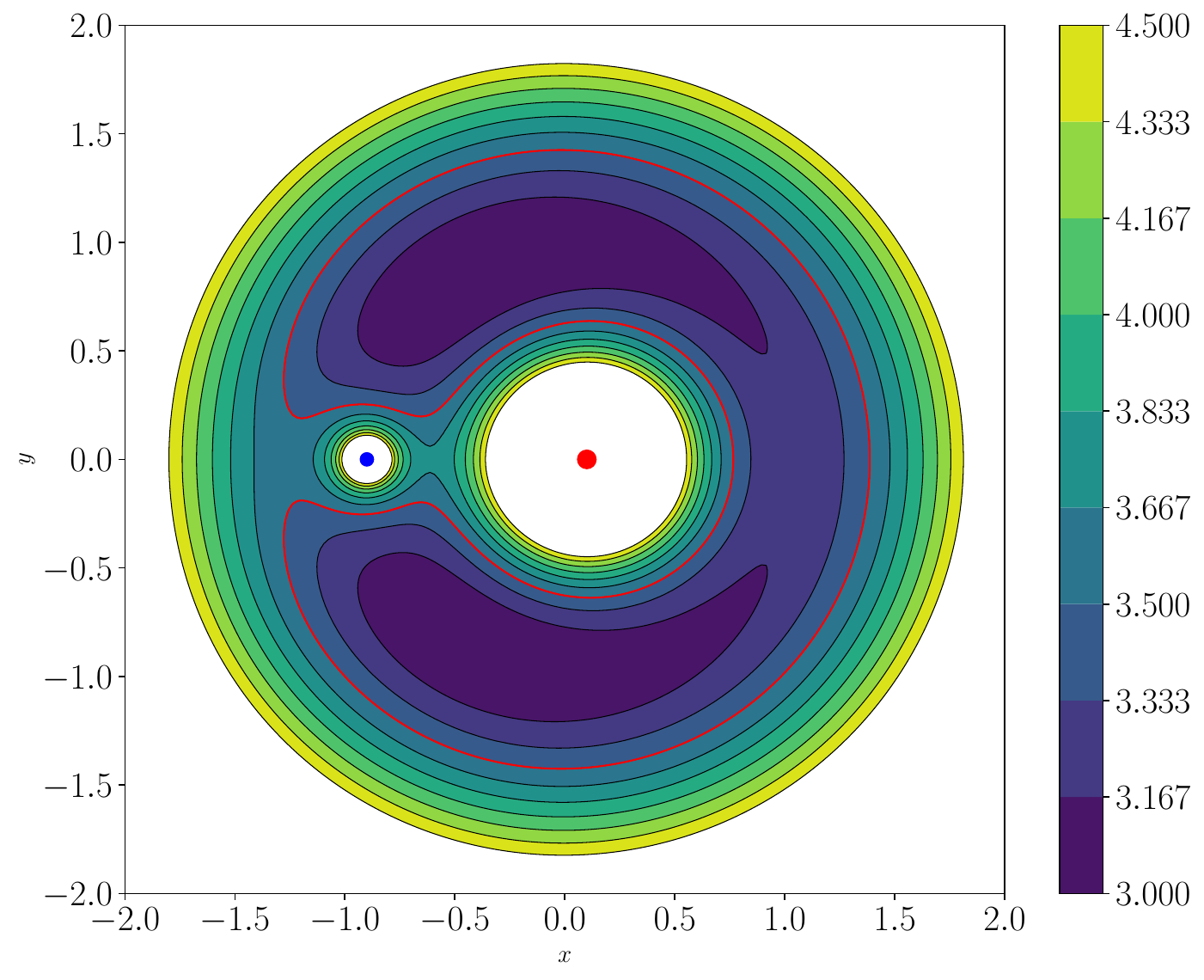}
        \caption{Zero Velocity curves for $F_{C,0.1}(x,y)$ for different values of $C$. The value $C=3.5$ is highlited in red.}
    \label{fig:0VelCurveLevelCurves}
\end{figure}

As in previous cases, we can parametrise the boundary using the Newton-jet procedure. To achieve this, treat $x$ as a parameter in $F_{C,\mu}(x,y)$ and seek a function of the form $y(\xi)$ such that $F_{C,\mu}(x_0 + \xi, y(\xi)) = 0$ in a neighbourhood of $x_0$.
We can repeat the methodology given in the previous examples and compute the iteration $P_i(\xi)$ as:
$$P_{i+1}(\xi)=P_i(\xi)- \dfrac{F_{C,0.1}(x_0+\xi,P_i(\xi))}{F'_{C,0.1}(x_0+\xi,P_i(\xi))}.$$

After applying Newton's method, we obtain the polynomial $P(\xi)$. In accordance with Theorem~\ref{teo:maineps}, we apply Newton's method with jet algebra for a fixed number of iterations beyond what is required for zeroth-order convergence. In this case we use 6 additional iterations to ensure that all terms have converged. Following the methodology of Figure~\ref{fig:academic_evalP}, we perform pointwise comparisons between this solution and a reference. As the exact solution is unavailable, we compute reference values $N(\xi)$ using standard Newton's method for real-valued functions for each value of the parameter $x$. Figure~\ref{fig:zeroVelContInd0} presents the results of this computation. We show the decimal logarithm of the error $|P(\xi) - N(\xi)|$ (purple curve) and the direct evaluations of $F_{C,\mu}(\xi, P(\xi))$ (green curve). 
Both cases exhibits the expected $10^{-16}$ plateau from double-precision arithmetic. It's termination defines the effective radius of convergence as we have seen in the previous examples.%

\begin{figure}
    \centering
    \includegraphics[width=\linewidth]{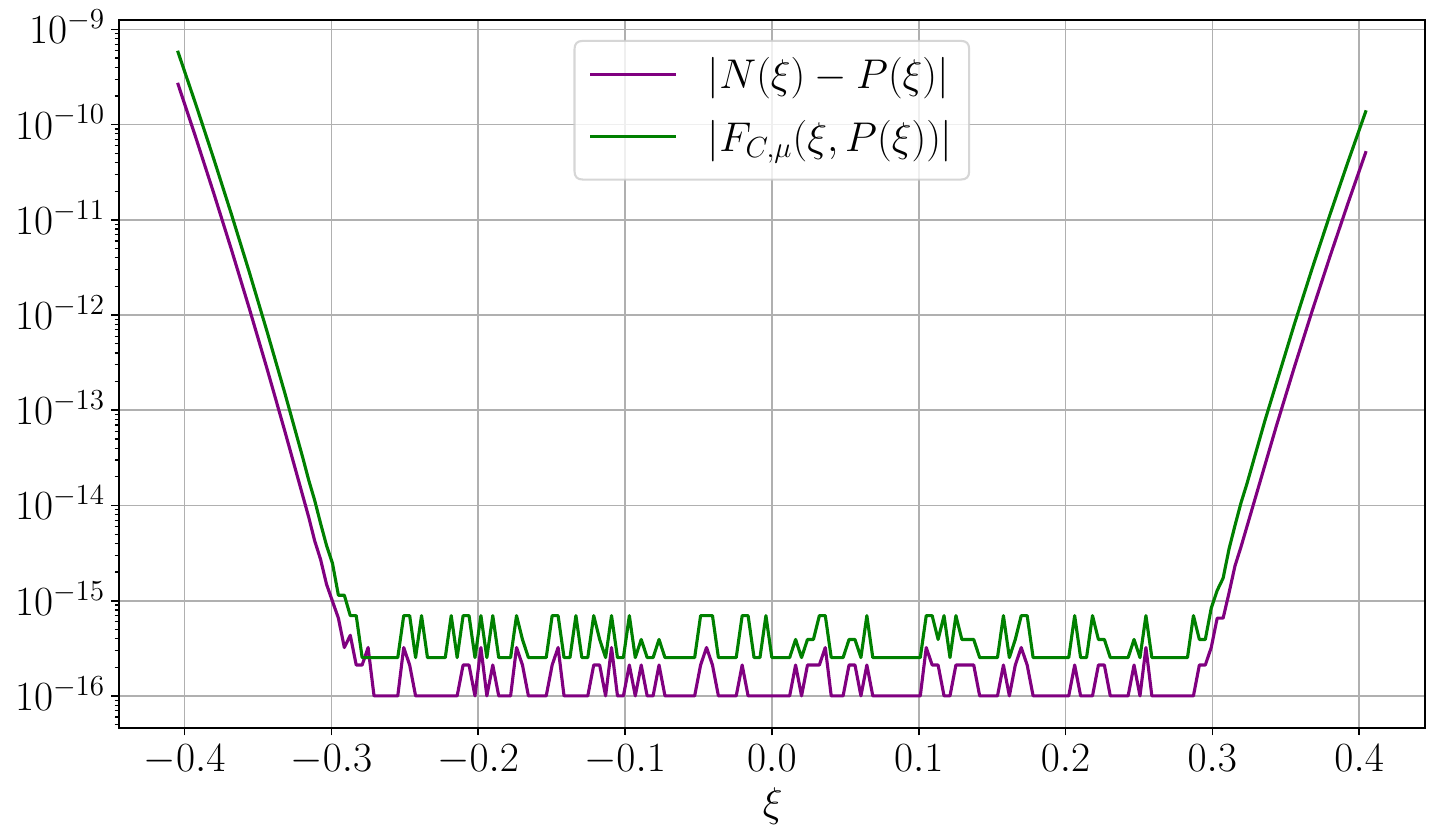}
    \caption{
    Purple: error in logarithmic scale between reference $N(\xi)$ and jet-computed $P(\xi)$; green: direct evaluation $F_{C,\mu}(\xi,P(\xi))$.}
    \label{fig:zeroVelContInd0}
\end{figure}

The easy identification of the plateau given by the effective radius of convergence suggest an algorithm for the continuation of the zero-velocity curve as follows.
Using the Newton-Jet procedure we obtain a polynomial approximation up to order $n$, $P_0(\xi)$, of the solution function $y_0(\xi)$ such that $F_{C,\mu}(\xi,y_0(\xi))=0$. There exists a maximal value of $\xi_m$ for which the approximation is accurate up to a given tolerance, i.e.\ $|P_0(\xi)-y_0(\xi)|<\varepsilon$ for a given tolerance $\varepsilon$ and for all $ \xi<\xi_{0,m}$. Hence, the value $(\xi_{0,m},P(\xi_{0,m}))$ can be used as a new initial seed to find a polynomial approximation up to order $n$, $P_1$, of the function $y_1(\xi)$ that solves $F_{C,\mu}(\xi_{0,m}+\xi,y_1(\xi))=0$. This can be iterated to consecutively obtain new continuation points.
Given a desired tolerance $\varepsilon$, the algorithm runs as follows:
\begin{enumerate}
    \item Set an initial value $x_0=0$ and compute $P_0(\xi)$ as the polynomial approximation up to order $n$ of $y_0(\xi)$ such that $F_{C,\mu}(\xi,y(\xi))=0$ using Newton's Jet procedure.
    \item Compute $\xi_{i,m} = \max \{ \tilde {\xi} \;\mbox{ s.t. }\; |P_i(\xi)-y_i(\xi)|<\varepsilon \;\; \forall \xi<\tilde{\xi}\}$. The following bisection method can be used:
    \begin{enumerate}
        \item Fix $\xi_-=0$ and $\xi_+=1$. While $F_{C,\mu}(\xi_+,y(\xi_+))<\varepsilon$, duplicate the value of $\xi_+$, i.e. $\xi_+=2\xi_+$.
        \item Compute $\xi_h$ as the midpoint between $\xi_-$ and $\xi_+$. If $F_{C,\mu}(\xi_h,y(\xi_h))<\varepsilon$ then $\xi_-=\xi_h$, otherwise $\xi_+=\xi_h$.
        \item Repeat steps (a) and (b) until $\xi_-$ and $\xi_+$ are close enough or $F_{C,\mu}(\xi_h,y(\xi_h))$ is close enough to $\varepsilon$
    \end{enumerate}
    \item Compute the initial seed for the next step as $p_{i+1}=P_i(\xi_{i,m})$ and the value of the parameter $x_{i+1}=\xi_{i}+\xi_{i,m}$.
    \item Use Newton's Jet to obtain the polynomial approximation up to order $n$ of $y_{i+1}(\xi)$, $P_{i+1}(\xi)$, such that $F_{C,\mu}(x_{i+1}+\xi,y_{i+1}(\xi))=0$.
    \item Repeat steps 2 to 4 until the conditions of ending the continuation are met.
\end{enumerate}

Using this algorithm, it is possible to prolong the zero-velocity curve further than the initial effective radius of convergence. However, the radius of convergence decreases whenever the curve approaches a point $\xi_\infty$ where $\displaystyle y'(\xi)\mathop{\longrightarrow}\limits_{\xi\to \infty}\pm\infty$. In that case, there does not exist any polynomial approximation to the curve at $\xi_\infty$.

As an alternative, the previous algorithm has been modified so that the Newton-jet method computes polynomial approximations of $y(x)$ or $x(y)$, depending on the slope:
\begin{itemize}
    \item If $|y'(x)| < 1$, the continuation is performed by treating $x$ as a parameter in the Newton-jet procedure, yielding a polynomial approximation of $y(x)$.
    \item If $|y'(x)| > 1$, $y$ is treated as the parameter, and a polynomial approximation of $x(y)$ is obtained.
\end{itemize}
Since at a given pair $(x_0,y_0)$ such that $y_0=y(x_0)$ and $x_0=x(y_0)$ it is verified that $y'(x_0)=1/x'(y_0)$, the algorithm changes the coordinate that serves as a parameter each time that the derivative of the polynomial evaluated at $\xi_m$ is bigger than 1. In the limit case $|y'(x)|=|x'(y)|=1$, the next step is done without changing the previous configuration.

The algorithm was iterated until a complete circuit of the zero-velocity curve was obtained, which took 48 iterations. This procedure successfully extends the curve (Figure~\ref{fig:0VelCurve}, black line). The coloured triangles superimposed on the curve mark the application points of Newton's method. Their colour is a function of the local effective radius of convergence \(\xi_{i,m}\), and their orientation denotes the local continuation parameters: a horizontal orientation (pointing left/right) signifies the use of the \(y(x)\) parametrization, while a vertical orientation (pointing up/down) signifies the \(x(y)\) parametrization. The pointing direction indicates the local continuation direction along the curve.

\begin{figure}
    \centering
    \includegraphics[width=0.49\linewidth]{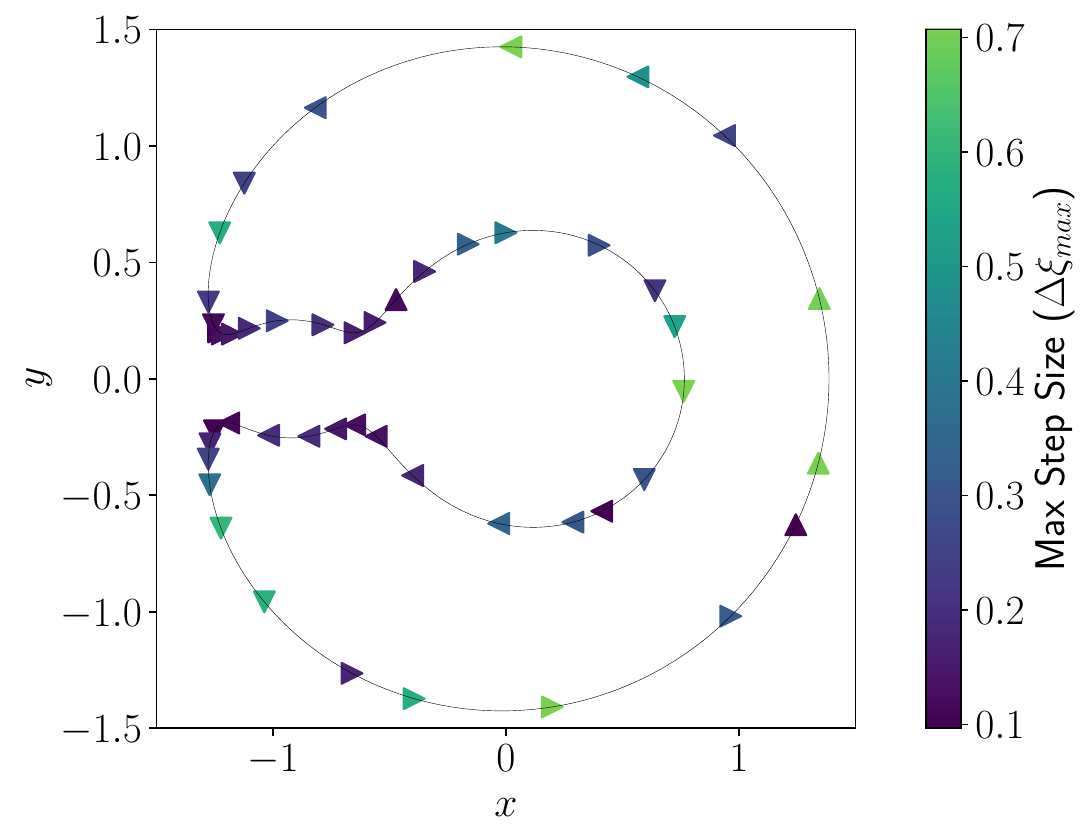}
    \includegraphics[width=0.405\linewidth]{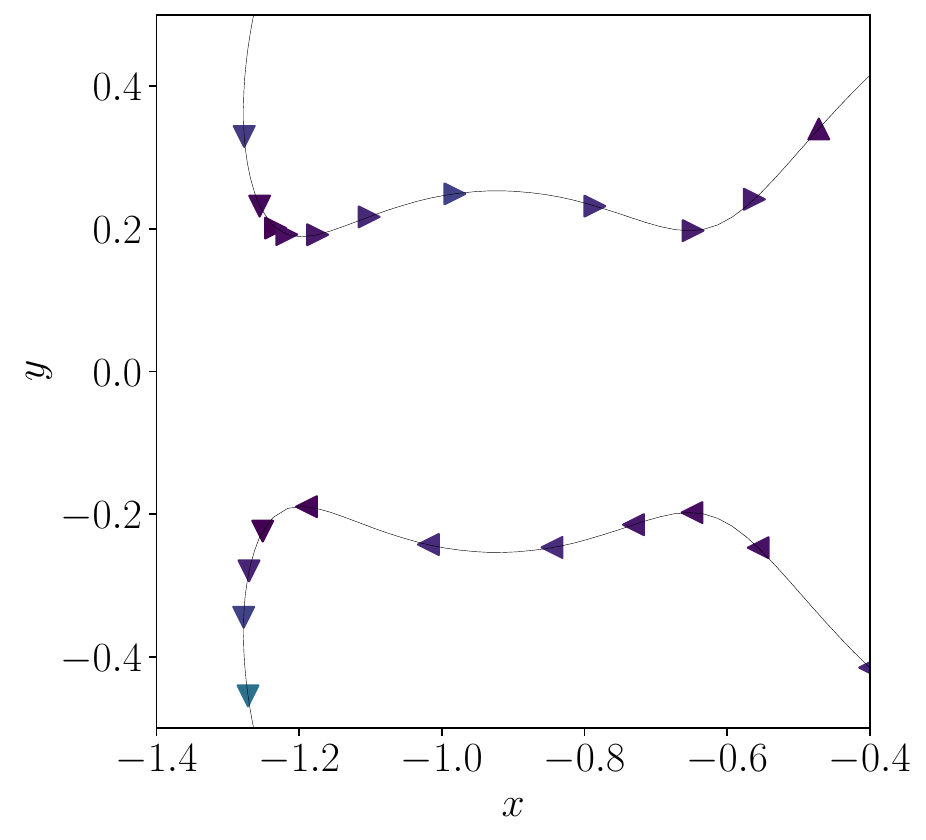}
    \caption{\textbf{Left panel:} Reconstruction of the zero-velocity curve \( F_{3.5,0.1}(x,y) = 0 \) (solid black line) via the algorithm described in Section~\ref{sec:zerovelocity} Triangles denote the central points for Newton iterations. Their orientation specifies the local parametrization (vertical: \( x(y) \); horizontal: \( y(x) \)) and the continuation direction (left, right, up or down). \textbf{Right panel:} Close-up view of the left portion of the curve.
            }
    \label{fig:0VelCurve}
\end{figure}

As a final observation,  note that the recovered curve is composed of several parametrisation segments. This implies that most points admit two valid parametrisations: one from the preceding segment and another from the succeeding segment. To illustrate this, each continuation segment is rendered in a different colour in Figure~\ref{fig:0velColours}. Each segment is parametrised by an arc-length parameter over the interval $[\xi_{i,\min},\xi_{i,\max}] = [\xi_i - \xi_{i,m},\, \xi_i + \xi_{i,m}]$. This symmetric interval about the central point $\xi_i$ is chosen under the heuristic that the polynomial approximation remains effective within this range observed in previous examples. In the figure, the Newton iteration point for a segment is located at the cross marking the end of the preceding segment. For a given segment, a circle marks its start $(\xi_{i,\min})$ and a cross marks its end $(\xi_{i,\max})$. Since each parametrisation interval spans both positive and negative values in the parameter space, the segments naturally overlap. In extreme cases, up to three parametrisations coincide at a single point. These cases, such as the olive segment near $(-0.75,-1.2)$, correspond precisely to points where the algorithm switches between horizontal ($y(x)$) and vertical ($x(y)$) parametrisations.

\begin{figure}
    \centering
    \includegraphics[width=0.8\linewidth]{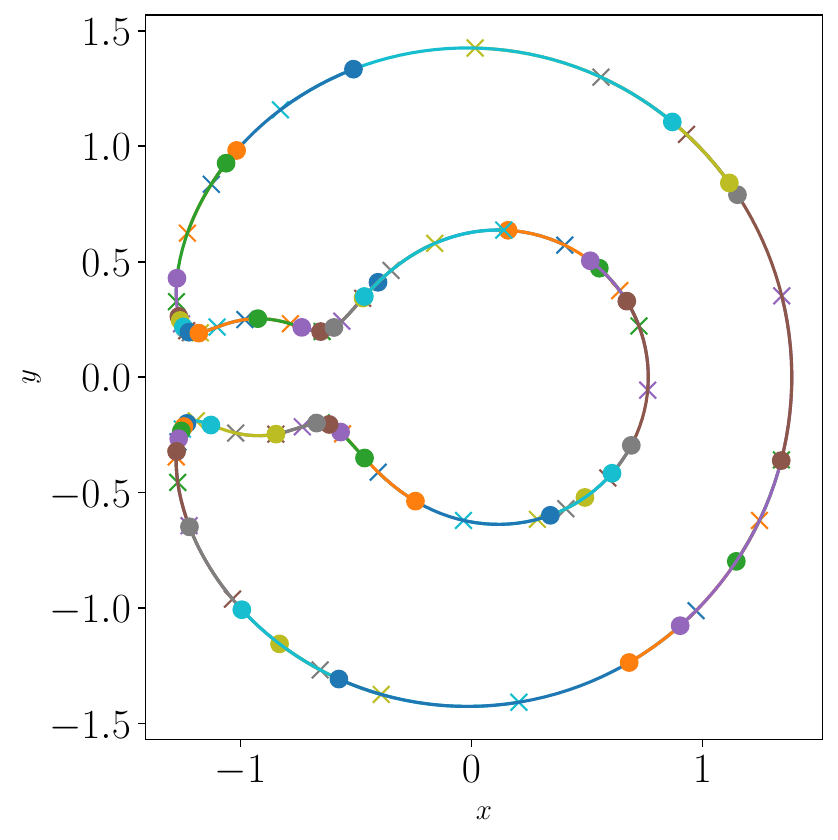}
    \caption{Parametrisation of a zero-velocity curve for \( F_{3.5,0.1}(x,y) = 0 \). Colours indicate distinct continuation segments (start: circle, end: cross).}
    \label{fig:0velColours}
\end{figure}

\section{Conclusions}\label{sec:conclusions}

This paper has developed and analysed a method for solving parameter-dependent non-linear equations using a polynomial algebra. The core idea is to approximate the implicit solution $x(c_0+\xi)$ of $f(x, c_0+\xi) = 0$ as a jet $P(\xi)$ and to employ polynomial arithmetic within a Newton-type iteration.

The central theoretical contribution is Theorem~\ref{teo:main}, which proves that each iteration of the Jet-Newton method doubles the number of correct coefficients in the Taylor expansion of the solution. This technique transforms Newton's method from a local root-finder into a high-order series generator, effectively providing a semi-global polynomial approximation of the solution branch.

To address practical considerations, Theorem~\ref{teo:maineps} characterizes the effect of small errors in the initial seed. It shows that the error propagates quadratically within a given polynomial order while simultaneously doubling the number of accurate coefficients at that error level, thereby improving the overall approximation.

The practical power of the approach is exemplified through three distinct examples:
\begin{itemize}
    \item An academic example provides insight into the method and highlights practical considerations when working in double and quadruple precision.
    \item The solution of Kepler's equation showcases specific cases where the absence of subtractive cancellation allows the method to perform slightly better than the theoretical bounds predict.
    \item A novel continuation algorithm is developed to compute zero-velocity curves in the circular restricted three-body problem, illustrating the method's robustness for geometric continuation.
\end{itemize}

The method is proven for simple zeros of sufficiently smooth, one-dimensional functions with a single parameter and has been shown to be effective for the problems studied. However, its application to higher-dimensional systems and degenerate singularities remains a subject for future investigation. Natural extensions include applying the framework to bifurcation detection and enhancing other high-order iterative schemes.

\bibliographystyle{plain} 
\bibliography{biblio} 


		


\end{document}